\newtheorem{thm}{Theorem}[section]
\newtheorem{lem}[thm]{Lemma}
\newtheorem{prop}[thm]{Proposition}
\newtheorem{cor}[thm]{Corollary}
\newtheorem{defn}[thm]{Definition}
\newcommand\QQ{{\mathbb Q}}
\newcommand{\RR}{\mathbb{R}}      
\newcommand{\ZZ}{\mathbb{Z}}      
\newcommand\CC{{\mathbb C}}
\newcommand\NN{{\mathbb N}}
\newcommand\gr{{\mathrm{Gr}_G}}
\newcommand{\norm}[1]{\left\lVert#1\right\rVert}
\newcommand*{\defeq}{\mathrel{\rlap{%
                     \raisebox{0.3ex}{$\m@th\cdot$}}%
                     \raisebox{-0.3ex}{$\m@th\cdot$}}%
                     =}
\newcommand{\g}{\mathfrak{g}}
\newcommand{\spec}{\mathrm{Spec}\ }
\begin{document}

\nocite{*}

\title{Crystal Combinatorics and Geometric Satake}

\author{Eric Chen}
\address[Eric Chen]{University of California, Berkeley}
\email{a5584266@berkeley.edu}

\begin{abstract} This is an REU paper written for the University of Chicago REU, summer 2017. The main purpose of this note is to collect some of the many of combinatorial models for MV cycles that exist in the literature. In particular, we will investigate MV polytopes, preprojective algebras, crystals, and LS galleries. We also compute explicitly some examples to help elucidate the theory and the connections between these models. 

\end{abstract}

\maketitle

\tableofcontents

\section{Introduction}
Let $G$ be a complex reductive algebraic group. The geometric Satake correspondence states that the category of certain equivariant perverse sheaves on the affine Grassmannian $\mathrm{Gr}_G$ is naturally equivalent to the category of representations of $\hat{G}$, the Langlands dual group of $G$ \cite{Zhu}. Among the many miraculous connections provided by geometric Satake, Mirkovi\'c and Vilonen have constructed subvarieties of $\mathrm{Gr}_G$ (now known as MV cycles) that serve as a geometric avatar for the theory of weight spaces of $\hat{G}$ \cite{Mirkovic-Vilonen}. 

Although MV cycles are easy to define, they are often difficult to explicitly calculate. Anderson, Baumann, and Kamnitzer's works remedy this by defining combinatorial models which they call MV polytopes, and Kamnitzer's recent work further relates these polytopes to the representation theory of quivers \cite{Baumann-Kamnitzer}. It is also revealed in these papers that the MV polytopes carry natural crystal structures, and as crystals they are isomorphic to certain crystals defined by Kashiwara in \cite{Kashiwara}.

On the other hand, Gaussent and Littelmann have independently developed another combinatorial model for MV cycles known as LS galleries \cite{Gaussent-Littelmann}. However, the relationship between LS galleries and MV polytopes is still largely unclear; a step towards this direction is taken in \cite{Baumann-Gaussent}, in which a crystal structure is defined on galleries, and compared with the crystal structure of MV cycles. 

The goal of the present paper is two-fold: first, give a self-contained, brief, and accurate formulation of the geometric Satake equivalence. This motivates our main object of study: the MV cycles. We introduce combinatorial models such as MV polytopes, quiver representations, and galleries mentioned above, and define their crystal combinatorics. Secondly, we wish to compute many examples: it is obvious in the present literature that there is a lack of examples worked out in detail. We hope that not only the combinatorics, but geometric Satake itself can be elucidated via these examples.

\section{Geometric Satake and MV Cycles} In this section, we review the basic setup for the geometric Satake correspondence in order to motivate MV cycles. Our treatment here essentially follows the exposition of Zhu in \cite{Zhu}, complemented with some explicit examples. We will only point out the essential properties of the affine Grassmannian needed to introduce MV cycles, as a full discussion would take us far afield. However, for readers who are interested in the affine Grassmannian itself, we provide references to more complete sources. 

\subsection{The Affine Grassmannian} \label{subsect: affineGrass} Let $k$ be a field, and let $G$ be a smooth affine $k$-group scheme. The affine Grassmannian $\gr$ is a central object of study in geometric representation theory, and admits multiple descriptions via moduli problems of lattices, $G$-bundles, and loop groups. We will use in this note the following geometric interpretation of $\gr$:

\begin{defn} \label{def: Grassmannian} The affine Grassmannian $\gr$ is the presheaf defined on a $k$-algebra $R$ by
$$\gr(R) = \left\{(\mathscr{E}, \beta): \begin{matrix}\mathscr{E} \text{  is a } G\text{-torsor on } D_R\text{, and }\\ 
 \beta: \mathscr{E}|_{D^*_R} \stackrel{\sim}{\to} \mathscr{E}^0|_{D^*_R} \text{  is a trivialization.}\end{matrix}\right\}$$
here $D_R = \spec R[[t]]$ and $D^*_R = \spec R((t))$ denote the formal disc and the formal punctured disc, respectively, and $\mathscr{E}^0$ is the trivial $G$-torsor on $D_R$. 
\end{defn}

Geometrically, $\gr$ is represented by an infinite dimensional \textit{ind-scheme}, i.e., the colimit of a sequence of closed immersions of schemes. However, we will only be interested in certain finite dimensional subvarieties, which are in fact projective varieties if we assume $G$ is reductive (see Theorem 1.2.2 in \cite{Zhu}). Our goal in this section is to describe and motivate these finite dimensional subvarieties of $\gr$. 

Although the geometric description of the affine Grassmannian is conceptually straightforward, we will need an algebraic description as well to perform computations. To this end, we first observe that every $G$-torsor on $D_R$ can be trivialized over $D_{R'}$ for some \'etale cover $\spec R' \to \spec R$, so it is natural to consider the presheaf $LG$ which classifies triples
$$LG(R) := \left\{(\mathscr{E}, \beta, \epsilon): (\mathscr{E}, \beta) \in \gr(R), \epsilon: \mathscr{E} \stackrel{\sim}{\to} \mathscr{E}^0\right\}.$$
Identifying $G(R[[t]])$ with $\mathrm{Aut}_{D_R}(\mathscr{E}^0)$ and $G(R((t)))$ with $\mathrm{Aut}_{D^*_R}(\mathscr{E}^0|_{D^*_R})$, it's not hard to see that $LG$ is isomorphic to the \textit{loop group presheaf}
$$R \mapsto G(R((t))),$$
and the redundancy of the variable $\epsilon$ can be canceled by a natural action of the \textit{positive loop group}
$$L^+G: R \mapsto G(R[[t]])$$
This intuitive discussion can be made rigorous by Proposition 1.3.6 in \cite{Zhu}:
\begin{prop} The affine Grassmannian $\gr$ is canonically identified with the fpqc quotient $[LG/L^+G]$. 
\end{prop}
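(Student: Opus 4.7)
The plan is to construct a natural $L^+G$-invariant morphism of presheaves $\pi \colon LG \to \gr$ and verify that it induces the desired isomorphism of fpqc sheaves. I would define $\pi$ on $R$-points by the forgetful rule $(\mathscr{E}, \beta, \epsilon) \mapsto (\mathscr{E}, \beta)$. Since the $L^+G$-action on $LG$ modifies only the trivialization $\epsilon$ (via the identification $L^+G(R) \cong \mathrm{Aut}_{D_R}(\mathscr{E}^0)$), the map $\pi$ is $L^+G$-invariant and thus factors through a morphism $\bar{\pi} \colon [LG/L^+G] \to \gr$, where the left side is interpreted as the fpqc sheafification of the naive presheaf quotient. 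The target $\gr$ is itself already an fpqc sheaf, since a $G$-torsor on $D_R$ together with a trivialization on $D_R^*$ can be reconstructed from fpqc-local data by descent, so no additional sheafification is needed on the right.

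To show $\bar{\pi}$ is an isomorphism it is enough to verify two things: (i) $\pi$ is surjective in the fpqc topology, and (ii) any two local lifts of a single section of $\gr$ differ by a unique element of $L^+G$. Condition (ii) is the easier half: given two triples $(\mathscr{E}, \beta, \epsilon_1)$ and $(\mathscr{E}, \beta, \epsilon_2)$ with the same underlying $(\mathscr{E}, \beta)$, the composite $\epsilon_2 \circ \epsilon_1^{-1}$ is an automorphism of $\mathscr{E}^0$ over $D_R$, hence an element of $G(R[[t]]) = L^+G(R)$, and this element uniquely intertwines the two lifts; conversely any such element produces a valid second lift.

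The hard part will be (i): given $(\mathscr{E}, \beta) \in \gr(R)$, one must produce an fpqc (indeed étale) cover $\spec R' \to \spec R$ over which $\mathscr{E}$ admits a trivialization on $D_{R'}$. This is the content of the standard lemma that for a smooth affine group scheme $G$, every $G$-torsor on the formal disc $D_R$ is étale-locally trivial on $\spec R$. My plan for this step is to first trivialize the restriction of $\mathscr{E}$ to the closed subscheme $V(t) \cong \spec R \hookrightarrow D_R$, which is possible étale-locally on $\spec R$ since $G$-torsors on affine schemes are étale-locally trivial by smoothness of $G$. One then inductively lifts the trivialization from modulo $t^n$ to modulo $t^{n+1}$; each obstruction vanishes because smoothness of $G$ guarantees the lifting property along the square-zero thickening $R'[t]/(t^{n+1}) \twoheadrightarrow R'[t]/(t^n)$, and the compatible system of approximations assembles into a genuine trivialization over $D_{R'} = \spec R'[[t]]$. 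Combining (i) and (ii) yields the desired identification $\gr \cong [LG/L^+G]$.
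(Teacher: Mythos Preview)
Your proposal is correct and follows exactly the approach the paper sketches in the paragraph preceding the proposition: the paper observes that every $G$-torsor on $D_R$ trivializes over $D_{R'}$ for some \'etale cover $\spec R' \to \spec R$, notes that the redundancy of the trivialization $\epsilon$ is cancelled by the $L^+G$-action, and then simply cites Proposition~1.3.6 of \cite{Zhu} rather than proving the statement itself. You have supplied the details the paper omits, in particular the inductive lifting argument for step~(i), so your write-up is in fact more complete than the paper's own treatment.
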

This makes computations more feasible in the following sense: if $k$ is separably closed, then we can (and do) identify the $k$-points of $\gr$ with the quotient space $G(k((t)))/G(k[[t]])$. 
\\

For the rest of the section, let $k$ be an algebraically closed field, and fix the notation $\mathcal{O} = k[[t]], F = k((t))$, $G$ a constant reductive $k$-group scheme, and $\mathrm{Gr} = \gr$. We also fix an embedding of a maximal torus inside a Borel $T \subset B \subset G$, and associate to $T$ its weight lattice $\mathbf{X}^*(T)$ and coweight lattice $\mathbf{X}_*(T)$. Recall that via Cartan decomposition we have an identification
$$G(\mathcal{O})\backslash G(F)/G(\mathcal{O}) \cong \mathbf{X}_*(T)/W \cong \mathbf{X}_*(T)^+$$
of the double coset space with the positive coweights of $T$, where $W$ is the Weyl group of $G$. We think of the double coset space on the left hand side as the left coset space for the action of $G(\mathcal{O})$ on $\mathrm{Gr}$. In terms of Definition \ref{def: Grassmannian}, this decomposition can be realized as follows. Let $\mathscr{E}_1, \mathscr{E}_2$ be two $G$-torsors over $D = D_R$, and 
$$\beta: \mathscr{E}_1|_{D^*} \stackrel{\cong}{\longrightarrow} \mathscr{E}_2|_{D^*}$$
an isomorphism over the punctured disc. If $(\mathscr{E}_1, \beta_1), (\mathscr{E}_2, \beta_2)$ are two sections of $\gr(R)$, then $\beta_2\beta\beta_1^{-1}$ defines an automorphism of the trivial $G$-torsor $\mathscr{E}^0|_{D^*}$, which is precisely an element of $G(F)$. Different choices of $\beta_1$ and $\beta_2$ correspond to left and right multiplication by elements of $G(\mathcal{O})$, so $\beta$ defines an element
$$\mathrm{Inv}(\beta) \in G(\mathcal{O})\backslash G(F)/G(\mathcal{O}) \cong \mathbf{X}_*(T)^+$$
which we call the \textit{relative position} of $\beta$. 

Let $\mu$ be a coweight. We define the \textit{(spherical) Schubert variety} $\mathrm{Gr}_{\leq \mu}$ as the subset
$$\mathrm{Gr}_{\leq \mu} := \left\{(\mathscr{E}, \beta) \in \mathrm{Gr}: \mathrm{Inv}(\beta) \leq \mu\right\}.$$
By Proposition 2.1.4 of \cite{Zhu}, we know that $\mathrm{Gr}_{\leq \mu}$ is in fact a closed subset, so we may endow it with the reduced subscheme structure. In fact, $\mathrm{Gr}_{\leq \mu}$ is a projective variety, and can be understood as the Zariski closure of the \textit{Schubert cell}
$$\mathrm{Gr}_\mu := \left\{(\mathscr{E}, \beta) \in \mathrm{Gr}: \mathrm{Inv}(\beta) = \mu\right\}.$$

We can also describe the Schubert cells algebraically. If $\mu$ is a coweight, then it defines an element $t^\mu := \mu(t)$ in $G(F)$, which represents the double coset
$$G_\mu = G(\mathcal{O}) \cdot t^\mu \cdot G(\mathcal{O}),$$
and we in fact have that $G_\mu = \mathrm{Gr}_\mu$ (see Proposition 2.1.5 in \cite{Zhu}). For the sake of concreteness, we quickly prove that these Schubert cells are in fact finite dimensional (quasi-projective) varieties:
\begin{prop} Let $2\rho$ be the sum of all positive roots. Then $\mathrm{Gr}_\mu$ has dimension $(2\rho, \mu)$.
\end{prop}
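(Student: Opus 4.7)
The plan is to realize $\mathrm{Gr}_\mu$ as a homogeneous space for $G(\mathcal{O})$ and compute the codimension of the stabilizer via root data. Since $G_\mu = G(\mathcal{O}) \cdot t^\mu \cdot G(\mathcal{O})$ and we are taking the quotient by the right $G(\mathcal{O})$ action, $\mathrm{Gr}_\mu$ is the $G(\mathcal{O})$-orbit of the base point $[t^\mu] \in G(F)/G(\mathcal{O})$. The stabilizer is $S_\mu := G(\mathcal{O}) \cap t^\mu G(\mathcal{O}) t^{-\mu}$, so as a homogeneous space
$$\mathrm{Gr}_\mu \;\cong\; G(\mathcal{O})/S_\mu,$$
and the dimension is the "codimension" of $S_\mu$ inside $G(\mathcal{O})$ (meaning the dimension of the tangent space at $[t^\mu]$, since $G(\mathcal{O})$ is pro-algebraic). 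Because a single orbit of a smooth group is smooth, it suffices to compute this tangent space.

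Next, I would carry out the computation infinitesimally via the root space decomposition of $\mathfrak{g}$. Writing $\mathfrak{g} = \mathfrak{t} \oplus \bigoplus_{\alpha \in \Phi} \mathfrak{g}_\alpha$, the cocharacter $t^\mu$ acts by $\mathrm{Ad}(t^\mu)$ on $\mathfrak{g}_\alpha(F)$ as multiplication by $t^{\langle \alpha,\mu\rangle}$, so
$$\mathrm{Ad}(t^\mu)\bigl(\mathfrak{g}(\mathcal{O})\bigr) \;=\; \mathfrak{t}(\mathcal{O}) \oplus \bigoplus_{\alpha \in \Phi} t^{\langle \alpha,\mu\rangle}\mathfrak{g}_\alpha(\mathcal{O}).$$
The tangent space at $[t^\mu]$ is then the quotient
$$T_{[t^\mu]}\mathrm{Gr}_\mu \;\cong\; \mathfrak{g}(\mathcal{O})\big/\bigl(\mathfrak{g}(\mathcal{O})\cap \mathrm{Ad}(t^\mu)\mathfrak{g}(\mathcal{O})\bigr),$$
which splits as a direct sum over roots. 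For each $\alpha$ with $\langle \alpha,\mu\rangle \geq 0$, the $\alpha$-component contributes a space of dimension $\langle \alpha, \mu\rangle$ (the quotient $\mathcal{O}/t^{\langle \alpha,\mu\rangle}\mathcal{O}$); for $\langle \alpha,\mu\rangle \leq 0$, the intersection contains all of $\mathfrak{g}_\alpha(\mathcal{O})$ and the contribution is zero. Since $\mu$ is dominant, positive roots are exactly those contributing positively, so summing gives
$$\dim \mathrm{Gr}_\mu \;=\; \sum_{\alpha > 0}\langle \alpha, \mu\rangle \;=\; \langle 2\rho, \mu\rangle.$$

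The main obstacle is treating $G(\mathcal{O})$ rigorously as a pro-algebraic (infinite-dimensional) group: one must justify that the "formal" codimension count from the Lie algebra really equals the dimension of the finite-dimensional variety $\mathrm{Gr}_\mu$. I would handle this by truncating, namely working modulo a sufficiently high power $t^N$ so that $S_\mu$ contains the level-$N$ congruence subgroup $K_N = \ker(G(\mathcal{O}) \to G(\mathcal{O}/t^N))$, and then doing the orbit computation inside the finite-dimensional jet group $G(\mathcal{O}/t^N)$ where standard homogeneous-space dimension theory applies. Alternatively, one can bypass the pro-algebraic issues by exhibiting the $U^-(F)$-orbit (or a transversal slice) through $[t^\mu]$ as an explicit affine space of dimension $\langle 2\rho,\mu\rangle$ sitting as an open subset of $\mathrm{Gr}_\mu$, but this requires the semi-infinite Mirkovi\'c–Vilonen cell machinery that we will only introduce later.
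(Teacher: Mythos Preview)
Your proposal is correct and follows essentially the same approach as the paper: identify the stabilizer of $t^\mu$ in $L^+G$ as $L^+G \cap t^\mu L^+G\, t^{-\mu}$, compute the tangent space at $t^\mu$ as $\mathfrak{g}(\mathcal{O})/(\mathfrak{g}(\mathcal{O}) \cap \mathrm{Ad}_{t^\mu}\mathfrak{g}(\mathcal{O}))$, and use the root-space decomposition to read off the dimension $(2\rho,\mu)$. Your added discussion of the truncation argument for the pro-algebraic subtleties is more careful than what the paper actually writes, but the core argument is identical.
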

\begin{proof} Note that the stabilizer of $t^\mu$ in $L^+G$ is precisely 
$$L^+G \cap t^\mu \cdot L^+G \cdot t^{-\mu}$$
Thus, we can identify the tangent space of $\mathrm{Gr}_\mu$ at $t^\mu$ with
$$\g(\mathcal{O})/\left(\g(\mathcal{O}) \cap \mathrm{Ad}_{t^\mu}\g(\mathcal{O})\right) = \bigoplus_{(\alpha, \mu) \geq 0} \g_\alpha(\mathcal{O})/t^{(\alpha, \mu)}\g_\alpha(\mathcal{O})$$
where the direct sum is taken over positive roots $\alpha$ of $G$ and $\g_\alpha$ denotes the corresponding root space. The dimension of the vector space on the right hand side is precisely $(2\rho, \mu)$, which is what we wanted to show.
\end{proof}

We will also need to consider certain infinite dimensional orbits. Let $U$ be the unipotent radical of $B$, then for every coweight $\lambda$, we can consider double cosets of the form 
$$S_\lambda := LU \cdot t^\lambda \cdot G(\mathcal{O}) = U(F) \cdot t^\lambda \cdot G(\mathcal{O})$$
where $LU$ acts on $t^\lambda$ via restriction from $LG$. 
\\

\noindent
\textbf{Example.} Let $G = \mathrm{PGL}_3$, and $\mu = (n_1,n_2,n_3)$ be a dominant coweight; this is the coweight
$$\mu: t \mapsto t^\mu = \begin{pmatrix}t^{n_1} & 0 & 0\\ 0 & t^{n_2} & 0\\ 0& 0&t^{n_3} \end{pmatrix}$$
for some $n_1 \geq n_2 \geq n_3$. Then the Schubert variety indexed by $\mu$ is precisely
$$\mathrm{Gr}_{\leq \mu} = \bigcup_{\lambda \leq \mu} \mathrm{Gr}_\lambda = \left\{g \in G(F)/G(\mathcal{O}): \begin{matrix} \nu(g_{ij}) \geq n_3,\\ \nu(2 \times 2\text{-minors}) \geq n_2+n_3, \\\nu(\mathrm{det}(g_{ij})) = n_1+n_2+n_3\end{matrix} \right\}$$
where $\nu$ is the standard valuation on $k((t))$. Now let's consider the infinite dimensional orbit $S_\lambda$, where $\lambda = (m_1, m_2, m_3)$:
$$S_\lambda = LU \cdot t^\lambda \cdot G(\mathcal{O}) = \begin{pmatrix}t^{m_1} & x & y\\ 0 & t^{m_2} &z\\0&0&t^{m_3} \end{pmatrix} \cdot G(\mathcal{O})$$
for some $x, y, z \in F = k((t))$. In the following sections, we will be interested in computing various intersections $S_\lambda \cap \mathrm{Gr}_{\leq \mu}$, which is feasible given the explicit presentation here. 
\\

We conclude this section by defining the convolution structure on $\gr$. Define the \textit{convolution Grassmannian} $\gr \tilde{\times} \gr$ as the presheaf whose $R$-points are
$$\left\{(\mathscr{E}_1, \beta_1, \mathscr{E}_2, \beta_2): \begin{matrix}(\mathscr{E}_1, \beta_1) \in \gr(R), \mathscr{E}_2 \text{ a } G\text{-torsor, and }\\ \beta_2: \mathscr{E}_2|_{D_R^*} \stackrel{\sim}{\to} \mathscr{E}_1|_{D_R^*}\end{matrix}\right\}$$
Then we have a map $m: \gr \tilde{\times} \gr \to \gr$ defined on $R$-points by
$$(\mathscr{E}_1, \beta_1, \mathscr{E}_2, \beta_2) \mapsto (\mathscr{E}, \beta_1\beta_2)$$
which we call the \textit{convolution map}. Similarly, we can define an $n$-fold convolution Grassmannian 
$$(\gr \tilde{\times} \cdots \tilde{\times} \gr)(R) := \left\{(\mathscr{E}_i, \beta_i)_{i=1}^n: \beta_i: \mathscr{E}_i|_{D_R^*} \stackrel{\sim}{\to} \mathscr{E}_{i-1}|_{D_R^*}\right\}$$
where $\mathscr{E}_0 = \mathscr{E}^0$ is the trivial $G$-torsor on $D_R$. We then get an $n$-fold convolution map
\begin{align*}
m: \gr \tilde{\times} \cdots \tilde{\times} \gr \to \gr\\
(\mathscr{E}_i, \beta_i)_{i=1}^n \mapsto (\mathscr{E}_n, \beta_1\cdots\beta_n)
\end{align*}

\noindent
\textbf{Remark.} The definition of the affine Grassmannian given in Definition \ref{def: Grassmannian} begs the question of whether we can find a moduli description of $\gr$ that is global, i.e., relative to some curve $X$. By an (infinitesimal) descent theorem of Beauville-Laszlo \cite{Beauville-Laszlo}, we can indeed reformulate $\gr$ in terms of the moduli of $G$-torsors on any curve $X$, along with a choice of trivialization away from a closed point $x$ in $X$; this is known as the \textit{Beilinson-Drinfeld Grassmannian} $\mathrm{Gr}_{G,x}$. If we choose $n$ distinct closed points $x_1, \ldots, x_n$ on some fixed curve $X$, then the local nature of Definition \ref{def: Grassmannian} will allow us to \textit{factorize} the Beilinson-Drinfeld Grassmannian as follows:
$$\mathrm{Gr}_{G, (x_1, \ldots, x_n)} \stackrel{\sim}{\longrightarrow} \gr \times \cdots \times \gr$$
where the left hand side classifies the following datum:
$$\left\{(\mathscr{E}_i, \beta_i)_{i=1}^n: \mathscr{E}_i \text{ are } G\text{-torsors on } X, \beta_i: \mathscr{E}_i|_{X \setminus x_i} \to \mathscr{E}^0|_{X \setminus x_i}\right\}$$
These properties characterize the Beilinson-Drinfeld Grassmannian as a \textit{factorization space}. The interested reader can find a complete treatment of the Beilinson-Drinfeld Grassmannian in Chapter 3 of \cite{Zhu}. 

\subsection{Geometric Satake} The geometric Satake equivalence as established by Mirkovi\'c and Vilonen states that the category of $G(\mathcal{O})$-equivariant perverse sheaves on the affine Grassmannian $\gr$ is equivalent to the category of representations of $\hat{G}$ as tensor categories. Our goal for this section is to decipher the statement of this result, and introduce the Mirkovi\'c-Vilonen cycles as a central character in this equivalence of categories. Since our goal is not to formally explain the general context for perverse sheaves, i.e., t-structures on triangulated categories, we will follow a ``shortest path" that leads to a precise statement of geometric Satake. As a consequence, certain formalisms may seem awkward as they are not discussed at an appropriate level of generality. Another simplification we will adopt is to consider only complex valued sheaves instead of $\overline{\QQ_\ell}$-sheaves; this will greatly streamline our exposition, at the cost of restricting to complex reductive groups. However, the ideas discussed here will easily generalize to the case of $\ell$-adic sheaves, and we refer the reader to the notes of \cite{Cesnavicius} for a comprehensive treatment of the general theory of $\ell$-adic perverse sheaves. 

As before, we fix $\mathcal{O} = \CC[[t]]$, $F = \CC((t))$, and $G$ a connected complex reductive group. We first introduce some basic definitions. 

Let $X$ be a scheme over $\CC$. Recall that a stratification of $X$ is a finite collection of locally closed subschemes $\{X_\alpha\}_{\alpha \in A}$ such that each $X_\alpha$ is smooth, $X = \bigcup_\alpha X_\alpha$, and $\overline{X_\alpha} = \bigcup_{\beta \in B} X_\beta$ for some subset $B \subseteq A$. 

\begin{defn} Let $\mathscr{F}$ be a sheaf on $X$. Then we say $\mathscr{F}$ is constructible if there exists a stratification $X = \bigcup_\alpha X_\alpha$ such that $\mathscr{F}|_{X_\alpha}$ is locally constant for each $X_\alpha$. 
\end{defn}

Let's consider the bounded derived category of constructible ($\CC$-valued) sheaves $\mathcal{D}^b_c(X, \CC)$. The objects in this category are complexes of sheaves $\mathscr{F}^\bullet$ such that each $\mathscr{H}^i(\mathscr{F}^\bullet)$ is constructible, and $\mathscr{F}^i = 0$ for $|i| >> 0$; morphisms in this category are chain maps, localized with respect to chain homotopies. From now on, by a \textit{sheaf} we will mean an object of $\mathcal{D}_c^b(X, \CC)$, and all pushforwards and pullbacks will be \textit{derived}. 

\begin{defn} A sheaf $\mathscr{F}^\bullet$ in $\mathcal{D}_c^b(X, \CC)$ is perverse if the following two conditions hold:
$$\mathrm{dim} \, \mathrm{supp} \, \mathscr{H}^{-i}(\mathscr{F}^\bullet) \leq i \ \text{ for all } i \in \ZZ$$
$$\mathrm{dim} \, \mathrm{supp} \, \mathscr{H}_c^{i}(\mathscr{F}^\bullet) \leq i \ \text{ for all } i \in \ZZ$$
We denote the full subcategory of perverse sheaves on $X$ by $\mathrm{Perv}(X)$. 
\end{defn}

This definition may seem artificial without context, but we give two reasons, one conceptual and the other technical, that outline the importance of considering the subcategory $\mathrm{Perv}(X) \subset \mathcal{D}^b_c(X, \CC)$. If $X$ is a smooth connected complex variety, the Riemann-Hilbert correspondence states that there is a fully faithful functor $dR: \mathrm{Conn}_{flat}(X) \to \mathcal{D}^b_c(X, \CC)$ sending a bundle with flat connection to a complex of sheaves. In particular, $dR$ gives an equivalence of categories between the subcategory of flat connections with regular singularities and $\mathrm{Perv}(X)$. Since one identifies flat connections with representations of the fundamental group (i.e., local systems) via monodromy, one can think of perverse sheaves on a general scheme as local systems. 

The notion of perversity is also important from the perspective of geometric Satake itself. We will see later that we want to compare a subcategory of $\mathrm{Perv}(X)$ with $\mathrm{Rep}_{\hat{G}}$, the category of finite dimensional complex representations of $\hat{G}$. But the latter is an abelian category, while $D^b_c(X,\CC)$ is not. A standard way to construct \textit{abelian} subcategories from a triangulated category such as $D^b_c(X,\CC)$ is to consider \textit{t-structures}; the conditions defining membership in $\mathrm{Perv}(X)$ is precisely the definition of the \textit{perverse t-structure}.\\

\noindent
\textbf{Remark.} We can equivalently formulate the condition of perversity as follows. Let $X = \bigcup_\alpha X_\alpha$ be a stratification of $X$ on which a sheaf $\mathscr{F}^\bullet$ is locally constant on each strata. Then $\mathscr{F}^\bullet$ is perverse if and only if
$$\mathscr{H}^k(i^*_\alpha \mathscr{F}^\bullet) = 0 \text{ for } k \not \in [-\mathrm{dim}\, X, -\mathrm{dim}\, X_\alpha].$$
$$\mathscr{H}^k_c(i^!_\alpha \mathscr{F}^\bullet) = 0 \text{ for } k \not \in [\mathrm{dim}\, X_\alpha, \mathrm{dim}\, X]$$
where $i_\alpha: X_\alpha \to X$ is the inclusion map. The equivalence of these two definitions can be found in Section 8.1 of \cite{Hotta-Takeuchi-Tanisaki}.

Now suppose $X$ admits an action of a connected algebraic group $G$, which we can visualize as
$$G \times X \underset{\mathrm{pr}_2}{\overset{a}{\rightrightarrows}} X$$ 
where $a$ is the action map and $\mathrm{pr}_2$ is projection on the second factor. We define a $G$-equivariant perverse sheaf on $X$ as a pair of datum $(\mathscr{F}, \theta)$, where $\mathscr{F}$ is a perverse sheaf on $X$, and $\theta$ is an isomorphism
$$\theta: a^* \mathscr{F} \overset{\sim}{\longrightarrow} \mathrm{pr}_2^* \, \mathscr{F}$$
respecting group multiplication. For example, $\theta_\mathrm{id}: (a^*\mathscr{F})_{\mathrm{id} \times X} \to (\mathrm{pr}_2^*\mathscr{F})_{\mathrm{id} \times X}$ is the identity map. With obvious morphisms, we can build the category of $G$-equivariant perverse sheaves, denoted $\widehat{\mathrm{Perv}_G}(X)$. It turns out that $G$-equivariance is not an additional structure, but a condition that certain perverse sheaves satisfy. This is made precise by the following lemma (Lemma A.1.2. \cite{Zhu}).

\begin{lem} \label{lem: equivariantCondition} The forgetful functor $\widehat{\mathrm{Perv_G}}(X) \to \mathrm{Perv}(X)$ is fully faithful, with essential image consisting of those perverse sheaves $\mathscr{F}$ such that $a^*\mathscr{F} \cong \mathrm{pr}_2^* \, \mathscr{F}$.
\end{lem}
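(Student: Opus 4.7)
The plan is to reduce both parts of the lemma to a single rigidity statement: for a smooth surjection $\pi: Y \to X$ with connected fibers, any morphism $\pi^*\mathscr{F} \to \pi^*\mathscr{G}$ between pullback perverse sheaves is determined by its restriction along any section of $\pi$. Both $a, \mathrm{pr}_2: G \times X \to X$ satisfy these hypotheses (connectedness of their fibers being exactly the connectedness hypothesis on $G$), so I can apply rigidity using the identity section $s: X \to G \times X$, $x \mapsto (\mathrm{id}, x)$. To prove rigidity I would use the projection formula and Künneth: $\mathrm{Hom}_{D^b_c(G \times X)}(\mathrm{pr}_2^*\mathscr{F}, \mathrm{pr}_2^*\mathscr{G}) \cong \mathrm{Hom}_{D^b_c(X)}(\mathscr{F}, \mathscr{G} \otimes H^*(G,\CC))$, and since $\mathscr{F}, \mathscr{G}$ are perverse all negative Ext groups vanish, so only the $H^0(G,\CC) = \CC$ summand contributes, recovering $\mathrm{Hom}(\mathscr{F}, \mathscr{G})$ via pullback.

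Faithfulness of the forgetful functor is automatic, since morphisms in $\widehat{\mathrm{Perv}_G}(X)$ are by definition morphisms of underlying perverse sheaves satisfying an extra compatibility. For fullness, given equivariant pairs $(\mathscr{F}, \theta_\mathscr{F})$, $(\mathscr{G}, \theta_\mathscr{G})$ and an underlying morphism $f: \mathscr{F} \to \mathscr{G}$, both $\mathrm{pr}_2^*(f)$ and $\theta_\mathscr{G} \circ a^*(f) \circ \theta_\mathscr{F}^{-1}$ lie in $\mathrm{Hom}(\mathrm{pr}_2^*\mathscr{F}, \mathrm{pr}_2^*\mathscr{G})$, and by the unit axiom $\theta_{\mathrm{id}} = \mathrm{id}$ they both restrict to $f$ along the section $s$. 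Rigidity then forces them to agree globally on $G \times X$, which is precisely the $G$-equivariance of $f$.

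For the characterization of the essential image, one direction is trivial. Conversely, given $\mathscr{F}$ perverse together with some isomorphism $\theta: a^*\mathscr{F} \stackrel{\sim}{\to} \mathrm{pr}_2^*\mathscr{F}$, I would first normalize by replacing $\theta$ with $\mathrm{pr}_2^*(\theta|_{s(X)})^{-1} \circ \theta$, so that $\theta|_{s(X)} = \mathrm{id}_\mathscr{F}$. The cocycle condition on $G \times G \times X$ is then an equality of two morphisms between the same pair of pullback perverse sheaves along the smooth connected-fiber surjection $G \times G \times X \to X$; a second application of rigidity reduces the check to the section $x \mapsto (\mathrm{id}, \mathrm{id}, x)$, where both sides visibly agree with $\mathrm{id}_\mathscr{F}$ by normalization.

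The main obstacle is the rigidity statement itself, which is where perversity and connectedness of $G$ combine to give the needed control. Once it is in hand, the remainder of the proof is a formal manipulation of the equivariance axioms, and in fact each appeal to rigidity above amounts to the standard fact that $\pi^*[\dim(Y/X)]: \mathrm{Perv}(X) \to \mathrm{Perv}(Y)$ is fully faithful for a smooth surjection $\pi$ with connected fibers.
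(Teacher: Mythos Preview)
The paper does not actually prove this lemma; it merely cites it as Lemma A.1.2 of \cite{Zhu} and moves on. Your argument is correct and is essentially the standard proof one finds in Zhu's appendix: the heart of the matter is exactly the rigidity you isolate, namely that $\pi^*[\dim(Y/X)]$ is fully faithful on perverse sheaves for a smooth surjection $\pi$ with connected fibers, and everything else is a formal unwinding of the equivariance data along the identity section. Your Künneth/projection-formula computation of $\mathrm{Hom}(\mathrm{pr}_2^*\mathscr{F},\mathrm{pr}_2^*\mathscr{G})$ is the usual way to see this, and the vanishing of negative Ext's between perverse sheaves is precisely where perversity enters. The normalization trick and the reduction of the cocycle identity on $G\times G\times X$ to the section $x\mapsto(\mathrm{id},\mathrm{id},x)$ are also standard and correctly executed.

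One small remark: in your Künneth step you implicitly use that $G$ is a variety (so that $(\mathrm{pr}_2)_*\CC_{G\times X}\cong \CC_X\otimes R\Gamma(G,\CC)$ with cohomology concentrated in nonnegative degrees); this is fine in the paper's setting of a connected algebraic group acting on a scheme, but it is worth saying explicitly, since the same statement for an arbitrary topological group action would need more care.
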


Thus, we may replace the category of $G$-equivariant perverse sheaves with its essential image in $\mathrm{Perv}(X)$; we will denote this category by $\mathrm{Perv}_G(X)$. 
\\

The central object we need to introduce in order to state geometric Satake is the category of $L^+G$-equivariant perverse sheaves on the affine Grassmannian, which, as we've seen in the previous section, involves a \textit{pro-algebraic group} acting on an \textit{ind-scheme}. Luckily, under suitable conditions, the constructions we've discussed so far will easily ``pass to the limit", giving us a suitable notion of equivariant perverse sheaves on $\gr$. 

Let $X = \varinjlim X_i$ be an ind-scheme, with an action of a pro-algebraic group $K = \varprojlim K_i$. We assume that the stabilizer of each geometric point only has finitely many connected components, each $X_i$ is a $K$-stable closed subscheme of finite type over $\CC$, and the action of $K$ on each $X_i$ factors through an algebraic quotient $K_i$. These are all conditions obviously satisfied by the action of $L^+G$ on $\gr$. Note that if $X_i \subset X_j$ is a closed subscheme, then we can assume that $K_i$ is a quotient of $K_j$, and we have functors 
$$\mathrm{Perv}_{K_i}(X_i) \overset{\sim}{\longrightarrow} \mathrm{Perv}_{K_j}(X_i) \longrightarrow \mathrm{Perv}_{K_j}(X_j)$$
where the first equivalence of categories is implied by Lemma \ref{lem: equivariantCondition}. Finally, we can define the category of $K$-equivariant perverse sheaves on $X$ as
$$\mathrm{Perv}_K(X) \defeq \varinjlim \mathrm{Perv}_{K_i}(X_i)$$
\\

Geometric Satake states that there is an equivalence of \textit{symmetric monoidal categories}
$$\mathrm{Perv}_{L^+G}(\gr) \overset{\sim}{\longrightarrow} \mathrm{Rep}(\hat{G})$$
So we need to come up with a symmetric monoidal structure on $\mathrm{Perv}_{L^+G}(\gr)$. Recall that we have a convolution map 
$$m: \gr \times \gr \cong \gr \tilde{\times} \gr \to \gr$$
The following ``miraculous theorem" (Proposition 5.1.4 in \cite{Zhu}) is the essential ingredient needed to defining the tensor structure on $\mathrm{Perv}_{L^+G}(\gr)$:

\begin{thm}
Let $\mathscr{F}, \mathscr{G}$ be sheaves in $\mathrm{Perv}_{L^+G}(\gr)$. Then $m_!(\mathscr{F} \boxtimes \mathscr{G})$ is again perverse. 
\end{thm}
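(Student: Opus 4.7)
The plan is to prove the theorem by exhibiting $m$ as a proper semi-small morphism and invoking the standard fact that pushforward under such a map preserves perversity. The argument breaks naturally into three stages: a reduction to bounded pieces on which $m$ is proper, translation of perversity of $m_*(\mathscr{F} \boxtimes \mathscr{G})$ into a semi-smallness estimate via the Schubert stratification, and the key dimension bound on the fibers of $m$.

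First I would reduce to the bounded setting. Since any object of $\mathrm{Perv}_{L^+G}(\gr)$ is supported on some Schubert variety, I may assume the supports of $\mathscr{F}$ and $\mathscr{G}$ lie in $\mathrm{Gr}_{\leq \mu}$ and $\mathrm{Gr}_{\leq \nu}$ respectively, so that $m$ restricts to a map $\mathrm{Gr}_{\leq \mu} \tilde{\times} \mathrm{Gr}_{\leq \nu} \to \mathrm{Gr}_{\leq \mu+\nu}$. The source is a locally trivial fiber bundle over the projective base $\mathrm{Gr}_{\leq \mu}$ with projective fiber $\mathrm{Gr}_{\leq \nu}$, hence is itself projective, so $m$ is proper on this piece and $m_! \cong m_*$. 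Next, using the earlier proposition $\dim \mathrm{Gr}_\lambda = (2\rho, \lambda)$, I would stratify the target by Schubert cells $\mathrm{Gr}_\lambda$ of codimension $(2\rho, \mu+\nu-\lambda)$. Combining proper base change with the stalk/costalk characterization of perversity from the preceding remark, and using that $\mathscr{F} \boxtimes \mathscr{G}$ is itself perverse on $\gr \tilde{\times} \gr$ (since $\gr \tilde{\times} \gr \to \gr$ is a fiber bundle and $\mathscr{G}$ is $L^+G$-equivariant), the perversity of $m_*(\mathscr{F} \boxtimes \mathscr{G})$ reduces to the semi-smallness bound
$$2 \dim m^{-1}(x) \leq (2\rho, \mu + \nu - \lambda) \quad \text{for } x \in \mathrm{Gr}_\lambda.$$

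The main obstacle will be establishing this fiber-dimension estimate. By $L^+G$-equivariance of $m$ and transitivity of the $L^+G$-action on $\mathrm{Gr}_\lambda$, it suffices to bound $\dim m^{-1}(t^\lambda)$. I would stratify this fiber by its intersections with the open strata $\mathrm{Gr}_{\mu'} \tilde{\times} \mathrm{Gr}_{\nu'}$ and project onto the second factor; each stratum is then carried to an intersection of a Schubert cell with a translate of a semi-infinite orbit $S_\tau$. The key input will then be the Mirkovi\'c--Vilonen dimension estimate bounding $\dim(\mathrm{Gr}_\mu \cap S_\tau)$ by $(\rho, \mu + \tau)$---the combinatorial heart of geometric Satake and precisely the estimate whose top-dimensional components define the MV cycles appearing throughout the rest of the paper. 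Rather than reprove it here, I would cite it from \cite{Mirkovic-Vilonen}; granting it, a routine count gives the semi-smallness bound and hence the theorem.
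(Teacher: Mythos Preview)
The paper does not prove this statement at all: it simply quotes it as ``the miraculous theorem (Proposition 5.1.4 in \cite{Zhu})'' and moves on to define the convolution product. So there is nothing in the paper to compare your proof against.

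Your outline is the standard semi-smallness argument (essentially the one in the cited reference \cite{Zhu}, going back to Lusztig and Mirkovi\'c--Vilonen), and the overall architecture---reduce to bounded pieces, use properness to get $m_! = m_*$, and deduce perversity from a fiber-dimension bound---is correct. One small imprecision in your last paragraph: the fiber $m^{-1}(t^\lambda)$ stratified by $\mathrm{Gr}_{\mu'} \tilde{\times} \mathrm{Gr}_{\nu'}$ does not project directly to an intersection of a Schubert cell with a semi-infinite orbit; rather it identifies with something like $\{gL^+G \in \mathrm{Gr}_{\mu'} : g^{-1}t^\lambda \in \mathrm{Gr}_{\nu'}\}$, an intersection of a Schubert cell with a \emph{translate of another Schubert cell}. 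It is the dimension of \emph{that} intersection which one bounds by covering it with pieces $\mathrm{Gr}_{\mu'} \cap S_\tau$ and invoking the Mirkovi\'c--Vilonen estimate $\dim(\mathrm{Gr}_{\mu'} \cap S_\tau) = (\rho, \mu' + \tau)$. With that correction the routine count goes through and yields the semi-smallness bound as you claim.
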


We define the \textit{convolution product} $\mathscr{F} \star \mathscr{G}$ on two perverse sheaves as $m_!(\mathscr{F} \boxtimes \mathscr{G})$.

\noindent
\textbf{Example.} Let's write down explicitly some examples of $L^+G$-equivariant perverse sheaves on $\gr$ by doing the most na\"ive construction following \cite{Notes}:

First, pick a $G(\mathcal{O})$-orbit $\mathrm{Gr}_\mu$, and start with the (shifted) constant sheaf $\CC[\mathrm{dim} \, \mathrm{Gr}_\mu]$ on $\mathrm{Gr}_\mu$. Then we inductively extend to the next strata as follows: let $j: \mathrm{Gr}_\mu \to \mathrm{Gr}_\lambda \cup \mathrm{Gr}_\mu$ be the inclusion map; consider the sheaf $j_*\CC[\mathrm{dim} \, \mathrm{Gr}_\mu]$ on $\mathrm{Gr}_\lambda \cup \mathrm{Gr}_\mu$. This sheaf is no longer perverse, so we truncate off anything in degrees strictly higher than $-\mathrm{dim}\, X_\beta-1$. We call this sheaf $\mathrm{IC}_\mu$, the \textit{intersection cohomology} of $\mathrm{Gr}_{\leq \mu}$. This explicit construction leaves the following crucial properties easy to check:

\begin{enumerate}
\item $\mathrm{IC}_\mu$ is supported on $\overline{\mathrm{Gr}_\mu} = \mathrm{Gr}_{\leq \mu}$, and
$$\mathrm{IC}_\mu|_{\mathrm{Gr}_\mu} \cong \CC[\mathrm{dim}\, \mathrm{Gr}_\mu]$$
In other words, $\mathrm{IC}_\mu$ is the constant sheaf over the ``smooth locus".
\item For some $\mathrm{Gr}_\lambda \subseteq \mathrm{Gr}_{\leq \mu}$ (i.e., for $\lambda \leq \mu$), we have
$$\mathscr{H}^*(i_\lambda^*\mathrm{IC}_\mu) \text{ is concentrated in degrees } [-\mathrm{dim}\, \mathrm{Gr}_\mu, -\mathrm{dim}\, \mathrm{Gr}_\lambda-1] \text{, and }$$
$$\mathscr{H}^*_c(i_\lambda^!\mathrm{IC}_\mu) \text{ is concentrated in degrees } [\mathrm{dim}\, \mathrm{Gr}_\lambda +1, \mathrm{dim}\, \mathrm{Gr}_\mu]$$
In particular, $\mathrm{IC}_\mu$ is perverse.
\end{enumerate}
Furthermore, one notices that the intersection cohomologies $\mathrm{IC}_\mu$ are \textit{Verdier self-dual}: $\mathbf{D}(\mathrm{IC}_\mu) \cong (\mathrm{IC}_\mu)^\vee$, where $\mathbf{D}$ denotes the Verdier duality functor and $\bullet^\vee$ is the standard linear duality. 
\\

The ``extend and truncate" construction given in the example may seem ad hoc, but these intersection cohomology sheaves are in fact the most important objects in $\mathrm{Perv}_{L^+G}(\gr)$. For example, they form a basis under the symmetric monoidal structure given by convolution:

\begin{thm}[Decomposition Theorem] Let $\mathrm{IC}_\lambda, \mathrm{IC}_\mu$ be two intersection cohomology sheaves on $\gr$. Then
$$\mathrm{IC}_\lambda \star \mathrm{IC}_\mu \cong \bigoplus_\nu \mathrm{IC}_\nu$$
where those $\nu$ that appear in the direct sum satisfy $\lambda+\mu \geq \nu$. In particular, $\mathrm{IC}_{\lambda+\mu}$ appears with multiplicity one.
\end{thm}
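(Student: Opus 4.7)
The plan is to combine three ingredients: (a) the miraculous theorem, which guarantees that $m_!(\mathrm{IC}_\lambda \boxtimes \mathrm{IC}_\mu)$ is perverse; (b) the BBD decomposition theorem applied to the proper convolution map; and (c) a dimension/support analysis that pins down which $\mathrm{IC}_\nu$'s can appear and that forces $\mathrm{IC}_{\lambda+\mu}$ to appear with multiplicity one.

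First I would restrict the convolution map to the finite-dimensional piece $m: \mathrm{Gr}_{\leq\lambda} \,\tilde{\times}\, \mathrm{Gr}_{\leq\mu} \to \mathrm{Gr}$. Since both Schubert varieties are projective, $m$ is proper, and its image is contained in $\mathrm{Gr}_{\leq\lambda+\mu}$ (for instance by using the algebraic description: $t^\lambda\cdot t^\mu = t^{\lambda+\mu}$, combined with the fact that $G(\mathcal{O})$-double cosets are indexed by dominant coweights, so multiplying a pair of lattices of relative positions $\leq\lambda,\leq\mu$ produces one of relative position $\leq\lambda+\mu$). Next, the external product $\mathrm{IC}_\lambda \boxtimes \mathrm{IC}_\mu$, interpreted on the twisted product via the local triviality of the $L^+G$-bundle defining $\mathrm{Gr}\,\tilde{\times}\,\mathrm{Gr}$, is itself the intersection cohomology sheaf of the smooth locus $\mathrm{Gr}_\lambda \,\tilde{\times}\, \mathrm{Gr}_\mu$; in particular it is a simple perverse sheaf of geometric origin.

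Now I would invoke the BBD decomposition theorem: because $m$ is proper and $\mathrm{IC}_\lambda \boxtimes \mathrm{IC}_\mu$ is a simple perverse sheaf of geometric origin, the pushforward decomposes as
\[
m_*(\mathrm{IC}_\lambda \boxtimes \mathrm{IC}_\mu) \;\cong\; \bigoplus_{\nu,\,k} \mathrm{IC}_\nu[k]^{\oplus m_{\nu,k}}
\]
for some multiplicities $m_{\nu,k}\in \mathbb{Z}_{\geq 0}$. Combining this with the miraculous theorem, the left-hand side is perverse (no cohomological shift), which forces $m_{\nu,k}=0$ whenever $k\neq 0$. Consequently $\mathrm{IC}_\lambda \star \mathrm{IC}_\mu \cong \bigoplus_\nu \mathrm{IC}_\nu^{\oplus m_\nu}$. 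The support bound $\nu \leq \lambda+\mu$ is then automatic from the containment $m\bigl(\mathrm{Gr}_{\leq\lambda}\,\tilde{\times}\,\mathrm{Gr}_{\leq\mu}\bigr)\subseteq \mathrm{Gr}_{\leq \lambda+\mu}$ established above.

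For the final multiplicity-one statement, I would restrict $m$ to the open stratum $\mathrm{Gr}_{\lambda+\mu}\subseteq \mathrm{Gr}_{\leq\lambda+\mu}$ and compare dimensions. Using the formula $\dim \mathrm{Gr}_\nu = (2\rho,\nu)$ for dominant $\nu$, we have
\[
\dim\bigl(\mathrm{Gr}_\lambda \,\tilde{\times}\, \mathrm{Gr}_\mu\bigr) \;=\; (2\rho,\lambda) + (2\rho,\mu) \;=\; (2\rho,\lambda+\mu) \;=\; \dim \mathrm{Gr}_{\lambda+\mu}.
\]
Since $m$ is $L^+G$-equivariant and both source and target over this open cell are single $L^+G$-orbits of the same dimension, the map is generically an isomorphism, so $m_*(\mathrm{IC}_\lambda\boxtimes\mathrm{IC}_\mu)$ restricted to $\mathrm{Gr}_{\lambda+\mu}$ is the shifted constant sheaf of rank one. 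This forces $m_{\lambda+\mu}=1$ in the decomposition. The main obstacle I expect is item (b), namely carefully verifying that $\mathrm{IC}_\lambda \boxtimes \mathrm{IC}_\mu$ really is the IC extension on the twisted product (this requires an \'etale-local trivialization argument) and ensuring that the hypotheses of the BBD decomposition apply over $\mathbb{C}$; everything else is essentially dimension counting together with the inputs already stated in the excerpt.
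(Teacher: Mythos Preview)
The paper does not actually prove this theorem; it is stated without proof and followed only by the remark that it is a necessary condition for geometric Satake since $\mathrm{Rep}(\hat{G})$ is semisimple. So there is no proof in the paper to compare your proposal against.

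That said, your proposal is the standard argument and is essentially correct. The three ingredients you identify --- the ``miraculous theorem'' (perversity of the convolution), the BBD decomposition theorem for the proper map $m$, and the dimension count $(2\rho,\lambda)+(2\rho,\mu)=(2\rho,\lambda+\mu)$ forcing birationality over the open cell --- are exactly the inputs used in the literature (e.g., in Zhu's notes that the paper cites). One small point worth tightening: the dimension equality alone only tells you that the fibers of $m$ over $\mathrm{Gr}_{\lambda+\mu}$ are zero-dimensional; to get multiplicity one you should observe that the fiber over the basepoint $t^{\lambda+\mu}$ is a single (reduced) point, which follows from the Cartan decomposition. Your own flagged obstacle (checking that the twisted external product is the IC sheaf of $\mathrm{Gr}_{\leq\lambda}\,\tilde{\times}\,\mathrm{Gr}_{\leq\mu}$ via \'etale-local trivialization) is real but routine.
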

This is a necessary condition for the geometric Satake correspondence to hold, since $\mathrm{Rep}(\hat{G})$ is a semisimple category. 

We are finally ready to formulate the geometric Satake correspondence: the last ingredient we need is Tannakian formalism. Let us recall this briefly. A $\CC$-linear tensor category $\mathcal{C}$ is \textit{Tannakian} if it admits a fully faithful functor $F: \mathcal{C} \to \mathrm{Vect}_\CC$. By general abstract nonsense, we can identify the essential image $F(\mathcal{C})$ as the category of representations of the group $\mathrm{Aut}(F)$.

\begin{thm}[Geometric Satake]\label{thm:geomSat} The cohomology functor $H^*: \mathrm{Perv}_{L^+G}(\gr) \to \mathrm{Vect}_{\CC}$ gives $\mathrm{Perv}_{L^+G}(\gr)$ the structure of a Tannakian category. Furthermore, the Tannakian group $\mathrm{Aut}(H^*)$ is canonically identified with $\hat{G}$, the Langlands dual group of $G$.
\end{thm}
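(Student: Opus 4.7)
The plan is to verify the Tannakian hypotheses for $H^*$ and then reconstruct $\hat{G}$ by explicitly matching root data. I would proceed in four stages: (i) exactness and faithfulness of $H^*$ on $\mathrm{Perv}_{L^+G}(\gr)$; (ii) a canonical tensor isomorphism $H^*(\mathscr{F} \star \mathscr{G}) \cong H^*(\mathscr{F}) \otimes H^*(\mathscr{G})$; (iii) construction of a commutativity constraint via the fusion product on the Beilinson-Drinfeld Grassmannian; and (iv) identification of $\mathrm{Aut}^\otimes(H^*)$ with $\hat{G}$ using the Mirkovi\'c-Vilonen weight filtration.

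For stage (i), exactness would follow from a parity/dimension vanishing result for the strata $\mathrm{Gr}_\mu$: the key input is that $\mathrm{Gr}_\mu$ has a cell decomposition with only even-dimensional cells, so the stalks and costalks of $\mathrm{IC}_\mu$ are concentrated in a single parity, which forces $H^*$ to kill no extensions and to send short exact sequences of perverse sheaves to short exact sequences of graded vector spaces. Faithfulness is then automatic from exactness and the semisimplicity provided by the Decomposition Theorem. In the course of proving exactness, I would also introduce the weight grading $H^*(\mathscr{F}) = \bigoplus_\nu H^*_c(S_\nu, \mathscr{F})$ by hyperbolic localization along the contracting $\mathbf{G}_m$-action on $\gr$ associated to $2\rho$; the fact that each summand is concentrated in degree $(2\rho, \nu)$ is the Mirkovi\'c-Vilonen theorem and will be indispensable in stage (iv). This grading is what identifies $\mathbf{X}_*(T)$ with a cocharacter lattice on the $\mathrm{Aut}(H^*)$-side.

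For stage (ii), I would pass to the convolution diagram $\gr \times \gr \xleftarrow{p} LG \times \gr \xrightarrow{q} \gr\,\tilde\times\,\gr \xrightarrow{m} \gr$ and use proper base change together with K\"unneth to write $H^*(\mathscr{F} \star \mathscr{G}) = H^*(m_!(\mathscr{F} \boxtimes \mathscr{G})) \cong H^*(\mathscr{F}) \otimes H^*(\mathscr{G})$. This is mostly formal given that $m$ is ind-proper. The main obstacle, and what I expect to be the hardest single step, is stage (iii): producing a \emph{symmetric} commutativity constraint. The naive swap of factors in $\gr\,\tilde\times\,\gr$ is not defined over $\gr \times \gr$, and a shift by dimension introduces sign issues. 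To circumvent this, I would use the factorization structure: on the Beilinson-Drinfeld Grassmannian $\mathrm{Gr}_{G,(x_1,x_2)}$ over a curve $X$, the fiber over the diagonal is $\gr$ and the fiber away from the diagonal is $\gr \times \gr$, while the convolution product appears via nearby cycles and the exterior product appears away from the diagonal. Transport of structure along the $S_2$-action permuting $x_1, x_2$ then provides an honest symmetric constraint whose sign is automatically correct because the dimension grading on $H^*$ is parity-compatible with the MV weight grading.

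Finally, for stage (iv), Tannakian reconstruction yields a pro-reductive group $\tilde{G} := \mathrm{Aut}^\otimes(H^*)$ over $\CC$ with $\mathrm{Rep}(\tilde{G}) \simeq \mathrm{Perv}_{L^+G}(\gr)$. The weight grading from stage (i) equips $H^*$ with a canonical cocharacter $\mathbf{X}_*(T) \to \mathrm{Hom}(\mathbf{G}_m, \tilde{G})$, producing a maximal torus $\tilde{T} \subset \tilde{G}$ with $\mathbf{X}^*(\tilde{T}) = \mathbf{X}_*(T)$. To finish, I would identify the roots of $\tilde{G}$ with the coroots of $G$ by reducing to the rank-one case $G = \sln{2}, \pgln{2}$ via Levi subgroups (the minuscule $\mathrm{IC}_\mu$ computations in these cases are explicit) and then invoke Chevalley's classification to conclude $\tilde{G} \cong \hat{G}$. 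Checking that the comultiplication and involutions match up on both sides to promote this isomorphism to a canonical one is delicate but essentially an unwinding of the fusion-based tensor structure constructed in stage (iii).
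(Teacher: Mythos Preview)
The paper does not actually prove this theorem: it states Theorem~\ref{thm:geomSat} and then immediately moves on to discuss categorification and the classical Satake isomorphism, deferring all details to \cite{Zhu} and \cite{Mirkovic-Vilonen}. So there is no ``paper's own proof'' to compare against.

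That said, your outline is a faithful sketch of the standard Mirkovi\'c--Vilonen argument as presented in the references the paper cites. Stages (i)--(iii) track the structure of \cite{Zhu} closely (parity vanishing on strata for exactness, K\"unneth plus proper base change for the monoidal structure, and the Beilinson--Drinfeld factorization for the commutativity constraint, with the sign correction handled by the weight grading). Stage (iv) is the one place where you are slightly glossing: the identification of $\mathrm{Aut}^\otimes(H^*)$ with $\hat G$ in \cite{Mirkovic-Vilonen} does not quite proceed by ``reducing to rank one via Levi subgroups'' --- rather, one first shows $\tilde G$ is reductive (using semisimplicity of the category), then pins down the root datum by analyzing $\mathrm{IC}_\mu$ for quasi-minuscule $\mu$ to extract the coroots directly, and finally invokes the isomorphism theorem for reductive groups. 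Your Levi-reduction idea can be made to work but requires you to first know that hyperbolic localization to a Levi is compatible with the tensor structure, which is itself a nontrivial input. None of this is a gap so much as a place where the real proof is more intricate than your sketch suggests.
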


We conclude this introduction to geometric Satake by mentioning that Theorem \ref{thm:geomSat} should be viewed as a \textit{categorification} of the classical Satake isomorphism, which states that the Hecke algebra $\mathcal{H}(\gr)$ of compactly supported $G(\mathcal{O})$-equivariant functions on $\gr$ is isomorphic as rings with the representation ring of $\hat{G}$. The analogy comes from the case of non-Archimedian local fields: in the case of $F = \mathbb{F}_p((t)), \mathcal{O} = \mathbb{F}_p[[t]]$, one replaces the category $\mathrm{Perv}_{L^+G}(\gr)$ of complex-valued perverse sheaves with the category of $\ell$-adic perverse sheaves for some prime $\ell \neq p$. Theorem \ref{thm:geomSat} holds just as before:
$$H^*: \mathrm{Perv}_{L^+G}(\gr, \overline{\QQ_\ell}) \overset{\sim}{\longrightarrow} \mathrm{Rep}_{\overline{\mathbb{Q}_\ell}}(\hat{G})$$
By Grothendieck's function-sheaf dictionary, we can recover a function on $\gr$ by taking traces of Frobenius on cohomology. Thus, geometric Satake lifts an isomorphism of rings to an equivalence of categories. 

\subsection{Geometry of MV Cycles} \label{subsect: MVcycles} 
The geometric Satake correspondence as stated in Theorem \ref{thm:geomSat} is concise, but sweeps under the rug many natural correspondences. The most important one for us is the following fact which we will make precise in this section: the geometry of $\gr$ remembers the highest weight theory of $\hat{G}$. The following extraordinary result explains this statement:

\begin{thm}[Mirkovi\'c, Vilonen]
There is a canonical isomorphism
$$H^*(S_\lambda, \mathrm{IC}_\mu) \cong \CC[\mathrm{Irr}(S_\lambda \cap \mathrm{Gr}_{\leq \mu})]$$
where the right hand side denotes the formal $\CC$-span of irreducible components in $S_\lambda \cap \mathrm{Gr}_{\leq \mu}$. In fact, there is a natural isomorphism of functors
$$H^*(\bullet) \cong \bigoplus_\lambda H^*(S_\lambda, \bullet): \mathrm{Perv}_{L^+G}(\gr) \to \mathrm{Vect}_\CC$$
where the direct sum is indexed over the coweight lattice of $G$. Identifying the essential image of $H^*$ with $\mathrm{Rep}_{\hat{G}}$, $H^*(\mathrm{IC}_\mu)$ is precisely the highest weight representation with highest weight $\mu$, $V_\mu$. Furthermore, the above decomposition
$$H^*(\mathrm{IC}_\mu) = H^*(\mathrm{Gr}_{\leq \mu}, \mathrm{IC}_\mu) = \bigoplus_\lambda H^*(S_\lambda, \mathrm{IC}_\mu)$$
is the usual weight space decomposition.
\end{thm}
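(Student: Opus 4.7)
The plan is to introduce the opposite semi-infinite orbits $T_\lambda := U^-(F) \cdot t^\lambda \cdot G(\mathcal{O})$ and the weight functors $F_\lambda := H^*_c(S_\lambda, -)$, and then to attack the four parts of the theorem in sequence. Everything hinges on a dimension estimate for orbit intersections, and this is the main obstacle.

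The first and hardest step is to establish that whenever $\lambda \leq \mu$ in the partial order defined by the positive coroot monoid, $S_\lambda \cap \mathrm{Gr}_\mu$ is nonempty and equidimensional of dimension $(\rho, \mu + \lambda)$, while $T_\lambda \cap \mathrm{Gr}_\mu$ is equidimensional of dimension $(\rho, \mu - \lambda)$. In particular $S_\lambda \cap \mathrm{Gr}_\nu$ is empty unless $\lambda \leq \nu$, and the top-dimensional irreducible components of $S_\lambda \cap \mathrm{Gr}_{\leq \mu}$ all lie in the open stratum $S_\lambda \cap \mathrm{Gr}_\mu$. I would approach this by reducing along each root $\mathfrak{sl}_2$ and using the explicit coordinate description of $S_\lambda$ illustrated above for $\mathrm{PGL}_3$, or alternatively by Mirkovi\'c--Vilonen's original argument via iterated convolution resolutions of $\mathrm{Gr}_{\leq \mu}$.

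Once this estimate is in hand, the IC-formula follows by combining it with the perversity bounds recalled in the excerpt, namely $\mathscr{H}^k(\mathrm{IC}_\mu|_{\mathrm{Gr}_\nu}) = 0$ for $k \notin [-\dim \mathrm{Gr}_\mu, -\dim \mathrm{Gr}_\nu - 1]$ when $\nu < \mu$ and $\mathrm{IC}_\mu|_{\mathrm{Gr}_\mu} \cong \CC[\dim \mathrm{Gr}_\mu]$: a standard top-degree cohomology estimate places $H^k_c(S_\lambda \cap \mathrm{Gr}_\nu, \mathrm{IC}_\mu)$ in degrees $\leq (2\rho, \lambda) - 1$ for $\nu < \mu$ and in degrees $\leq (2\rho, \lambda)$ for $\nu = \mu$. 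The stratification spectral sequence then collapses in top degree, and $H^{(2\rho, \lambda)}_c(S_\lambda, \mathrm{IC}_\mu)$ is freely spanned by the fundamental classes of the top-dimensional components of $S_\lambda \cap \mathrm{Gr}_\mu$, which by equidimensionality correspond bijectively (via Zariski closure) to the irreducible components of $S_\lambda \cap \mathrm{Gr}_{\leq \mu}$. The same argument, applied to an arbitrary perverse sheaf, shows that $F_\lambda$ is $t$-exact (up to the shift by $(2\rho, \lambda)$) on all of $\mathrm{Perv}_{L^+G}(\gr)$.

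For the direct-sum decomposition $H^*(-) \cong \bigoplus_\lambda F_\lambda(-)$, I would invoke Braden's hyperbolic localization for the $\mathbf{G}_m$-action on $\gr$ induced by any regular dominant cocharacter $\mathbf{G}_m \to T$: the fixed locus is $\{t^\lambda\}_{\lambda \in \mathbf{X}_*(T)}$, while the attracting and repelling sets at $t^\lambda$ are $S_\lambda$ and $T_\lambda$ respectively, so Braden's theorem yields the desired isomorphism naturally on every $L^+G$-equivariant sheaf. Refining the $\mathbf{G}_m$-grading via the residual $T$-action then places $F_\lambda$ in the $\lambda$-weight space of the $\hat{G}$-module structure provided by Theorem \ref{thm:geomSat}. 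Finally, the dimension estimate gives $F_\lambda(\mathrm{IC}_\mu) = 0$ for $\lambda \not\leq \mu$, while $S_\mu \cap \mathrm{Gr}_{\leq \mu} = S_\mu \cap \mathrm{Gr}_\mu$ is irreducible, so $F_\mu(\mathrm{IC}_\mu)$ is one-dimensional and $\mu$ is the unique maximal weight of $H^*(\mathrm{IC}_\mu)$ occurring with multiplicity one; this forces $H^*(\mathrm{IC}_\mu) \cong V_\mu$ as $\hat{G}$-representations and identifies the Braden decomposition with the weight decomposition. The real work throughout is the equidimensionality in the first step; everything downstream is either formal (Braden) or careful bookkeeping with the perversity bounds of $\mathrm{IC}_\mu$.
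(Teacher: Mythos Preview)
Your outline is a faithful sketch of the standard Mirkovi\'c--Vilonen argument (as in \cite{Mirkovic-Vilonen} and the exposition in \cite{Zhu}): the equidimensionality estimate for $S_\lambda \cap \mathrm{Gr}_\mu$ and $T_\lambda \cap \mathrm{Gr}_\mu$, the perversity/spectral-sequence bookkeeping to extract the top compactly supported cohomology, and hyperbolic localization for the functorial decomposition are exactly the ingredients one needs, and you have organized them correctly.

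However, there is nothing in the paper to compare your proposal against: the paper does not prove this theorem. It is stated in Section~\ref{subsect: MVcycles} as a result of Mirkovi\'c and Vilonen, attributed to \cite{Mirkovic-Vilonen}, and then immediately used (the next sentence is ``The following useful observation follows immediately'' leading to the corollary about dimensions of weight spaces). The paper's stated goal is to \emph{motivate} MV cycles via this theorem and then move on to the combinatorial models; giving a proof of the Mirkovi\'c--Vilonen theorem itself is outside its scope. So your proposal is not a reconstruction of anything in the paper---it is an independent (and essentially correct) sketch of the original proof from the cited literature.
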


The following useful observation follows immediately.
\begin{cor}
The dimension of the $\lambda$-weight space inside $V_\mu$ is precisely $\#\mathrm{Irr}(S_\lambda \cap \mathrm{Gr}_{\leq \mu})$.
\end{cor}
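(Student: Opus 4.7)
The corollary is essentially an immediate unpacking of the preceding theorem of Mirkovi\'c and Vilonen, so the ``proof" is more a matter of chasing identifications than of doing new work. The plan is to read off the $\lambda$-weight space of $V_\mu$ from the two isomorphisms in the theorem and compare dimensions.

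First I would invoke the second half of the theorem: under the geometric Satake equivalence, the sheaf $\mathrm{IC}_\mu$ corresponds to the irreducible representation $V_\mu$ with highest weight $\mu$, and the decomposition
\[
H^*(\mathrm{IC}_\mu) = \bigoplus_\lambda H^*(S_\lambda, \mathrm{IC}_\mu)
\]
is precisely the decomposition into weight spaces. Thus by definition the $\lambda$-weight space of $V_\mu$ is identified with the summand $H^*(S_\lambda, \mathrm{IC}_\mu)$.

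Next I would apply the first isomorphism of the theorem, namely
\[
H^*(S_\lambda, \mathrm{IC}_\mu) \cong \CC[\mathrm{Irr}(S_\lambda \cap \mathrm{Gr}_{\leq \mu})],
\]
and take complex dimensions on both sides. Since the right-hand side is a vector space with a basis indexed by the irreducible components of $S_\lambda \cap \mathrm{Gr}_{\leq \mu}$, its dimension is exactly $\#\mathrm{Irr}(S_\lambda \cap \mathrm{Gr}_{\leq \mu})$. Combining this with the previous identification yields the claimed equality $\dim (V_\mu)_\lambda = \#\mathrm{Irr}(S_\lambda \cap \mathrm{Gr}_{\leq \mu})$.

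There is no real obstacle here; the entire content lies in the Mirkovi\'c--Vilonen theorem, which is assumed. The only thing to be mildly careful about is the implicit claim that the number of irreducible components $\#\mathrm{Irr}(S_\lambda \cap \mathrm{Gr}_{\leq \mu})$ is finite, so that the dimension count makes sense; but this is ensured by the fact that $\mathrm{Gr}_{\leq \mu}$ is a finite-dimensional projective variety, so the intersection $S_\lambda \cap \mathrm{Gr}_{\leq \mu}$ has only finitely many irreducible components.
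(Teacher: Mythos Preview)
Your proposal is correct and matches the paper's treatment: the paper simply states that the corollary ``follows immediately'' from the Mirkovi\'c--Vilonen theorem, and your argument is precisely the unpacking of that immediacy.
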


Let's see this result in action with a couple of examples. 

\noindent
\textbf{Example.} Let $G = \mathrm{PGL}_3$, and $\mu$ the dominant coweight $(1,0,-1)$. By the example in Section \ref{subsect: affineGrass}, we have that
$$\mathrm{Gr}_{\leq \mu} = \left\{g \in G(F)/G(\mathcal{O}): \begin{matrix}\nu(g_{ij}) \geq -1,\\ \nu(2 \times 2\text{-minors}) \geq -1,\\ \nu(\mathrm{det}(g_{ij})) = 0  \end{matrix}\right\}$$
We compute some examples of intersections $S_\lambda \cap \mathrm{Gr}_{\leq \mu}$ for various $\lambda$. Let's first consider the semi-infinite orbit $S_\lambda$ where $\lambda = (0,0,0)$. By definition, $S_\lambda$ is the $U(F)$-orbit of the point $t^\lambda = \mathrm{id}$ in $\gr$; we can easily calculate the stabilizer of this action:
$$\mathrm{Stab}_{U(F)}(t^\lambda) = \{n \in U(F): t^\mu \cdot n \cdot t^{-\mu} \in G(\mathcal{O})\} = U(\mathcal{O})$$
Thus, we can calculate the intersection $S_\lambda \cap \mathrm{Gr}_{\leq \mu}$ as

\begin{align*}
S_\lambda \cap \mathrm{Gr}_{\leq \mu}
&= \left\{\begin{pmatrix}1&x&z\\ 0&1&y\\0&0&1\end{pmatrix} \in U(F)/U(\mathcal{O}): \begin{matrix}\nu(x), \nu(y), \nu(z) \geq -1,\\ \nu(xy-z) \geq -1  \end{matrix}\right\}\\
\end{align*}
From here we can break into two cases. First we consider when $\nu(x) = -1$; this forces $\nu(y) = 0$ (hence $y = 0$), and there are no extra conditions on $z$ besides $\nu(z) \geq -1$. More explicitly, we have
$$x = x_{-1}t^{-1} \quad y = 0 \quad z = z_{-1}t^{-1}$$
This gives us a copy of $\mathbb{G}_m \times \mathbb{A}^1$, with coordinates $(x_{-1}, z_{-1})$. In the other case, the roles of $x$ and $y$ are switched, and we have
$$x = 0 \quad y = y_{-1}t^{-1} \quad z = z_{-1}t^{-1}$$
which gives us another copy of $\mathbb{G}_m \times \mathbb{A}^1$, with coordinates $(y_{-1}, z_{-1})$. Thus, we see that there are two irreducible components in the intersection $S_\mu \cap \mathrm{Gr}_{\leq \lambda}$:
$$S_\mu \cap \mathrm{Gr}_{\leq \lambda} = \mathbb{G}_m \times \mathbb{A}^1 \cup \mathbb{G}_m \times \mathbb{A}^1$$

Using the correspondence given by geometric Satake, we can view this geometric fact as reflecting the representation theory of $\hat{G} = \mathrm{SL}_3$. Indeed, the highest weight representation $\hat{G}$ with weight $\mu = (1, 0, -1)$ is the adjoint representation on $\mathfrak{sl}_3(\CC)$; this representation contains a two dimensional $\lambda = (0,0,0)$-weight space spanned by
$$\begin{pmatrix}1&0&0\\0&-1&0\\0&0&0 \end{pmatrix} \text{  and  }  \begin{pmatrix}0&0&0\\0&1&0\\0&0&-1 \end{pmatrix}.$$

\noindent
\textbf{Example.} Let's calculate another example with $G = \mathrm{PGL}_3$ and $\mu = (1, 0, -1)$. Note that for a general coweight $\lambda$, we have
$$\mathrm{Stab}_{U(F)}(t^\lambda) = U(F) \cap t^\lambda \cdot G(\mathcal{O}) \cdot t^{-\lambda}$$
Following the previous example, we have a general strategy for calculating the intersections $S_\lambda \cap \mathrm{Gr}_{\leq \mu}$:
$$S_\lambda \cap \mathrm{Gr}_{\leq \mu} = \left\{n \cdot t^\lambda: \begin{matrix}n \in U(F)/U(F) \cap t^\lambda G(\mathcal{O}) t^{-\lambda}\\\nu(\text{entries}) \geq -1,\\ \nu(2 \times 2\text{-minors}) \geq -1, \\ \nu(\mathrm{det}(n \cdot t^\lambda)) = 0. \end{matrix}\right\}$$
Let's calculate this in the case of $\lambda = (1, 0, -1)$. In this case, some $n \in U(F)$ stabilizes $t^\lambda$ if and only if
$$n = \begin{pmatrix}1&x&z\\0&1&y\\0&0&1\end{pmatrix} \in t^\lambda G(\mathcal{O}) t^{-\lambda} \iff \begin{pmatrix}1&t^{-1}x&t^{-2}z\\0&1&t^{-1}y\\0&0&1\end{pmatrix} \in G(\mathcal{O}) \iff x, y \in t\mathcal{O}, z \in t^2\mathcal{O}$$
So we can consider $x,y$ as elements in $\mathcal{O}/t\mathcal{O}$ and $z$ as an element in $\mathcal{O}/t^2\mathcal{O}$. Now by definition, we have
$$S_\lambda = \begin{pmatrix}1&x&z\\0&1&y\\0&0&1\end{pmatrix} \cdot t^\lambda = \begin{pmatrix}t&x&t^{-1}z\\0&1&t^{-1}y\\0&0&t^{-1}\end{pmatrix}$$
On the other hand, the criteria on valuations defining $\mathrm{Gr}_{\leq \mu}$ forces $x, y, z$ to have valuation at least -1, and there are no further conditions. So we can write
$$x = x_0 \quad y = y_0 \quad z = z_0 + z_1t$$
for arbitrary choices of $x_0, y_0, z_0, z_1$. Thus, the intersection $S_\lambda \cap \mathrm{Gr}_{\leq \mu}$ is irreducible and isomorphic to $\mathbb{A}^4$, with coordinates $(x_0, y_0, z_0, z_1)$. 

From the point of view of the representation theory of $\hat{G} = \mathrm{SL}_3$, the unique MV cycle in this intersection corresponds to the one dimensional highest weight space in the adjoint representation on $\mathfrak{sl}_3(\CC)$ spanned by
$$\begin{pmatrix}0&0&1\\0&0&0\\0&0&0 \end{pmatrix}$$

We conclude this introduction to the geometric Satake correspondence by mentioning the following well-known result, which will allow us to assume without loss of generality that $G$ is simply connected for certain calculations involving MV cycles. 

\begin{prop} \label{prop: simplyConnGr}Let $\g$ be a semisimple Lie algebra, and $\tilde{G}$ the unique simply connected Lie group with Lie algebra $\g$. Let $G$ be another connected Lie group with Lie algebra $\g$, then we have a noncanonical isomorphism
$$\mathrm{Gr}_G \cong \bigsqcup_{\gamma \in \pi_1^{\mathrm{alg}}(G)} \mathrm{Gr}_{\tilde{G}}$$
where the disjoint union on the right hand side is indexed over the algebraic fundamental group of $G$.
\end{prop}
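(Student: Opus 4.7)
The plan is to exploit the central isogeny $1 \to Z \to \tilde{G} \to G \to 1$, where $Z$ is the finite central subgroup scheme of $\tilde{G}$ with $\tilde{G}/Z \cong G$, and to analyze the induced morphism of ind-schemes $\pi\colon \mathrm{Gr}_{\tilde G} \to \mathrm{Gr}_G$. The goal is to show that $\pi$ identifies $\mathrm{Gr}_{\tilde G}$ with the neutral component $\mathrm{Gr}_G^\circ$; once this is in place, the $LG$-action on $\mathrm{Gr}_G$ will let us deduce that every component is isomorphic to $\mathrm{Gr}_G^\circ$, with component set in bijection with $\pi_1^{\mathrm{alg}}(G)$.

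First, I would establish that the connected components of $\mathrm{Gr}_G$ are pairwise isomorphic and indexed by $\pi_1^{\mathrm{alg}}(G)$. Since $\mathrm{Gr}_G = LG/L^+G$ is homogeneous under $LG$, one has $\pi_0(\mathrm{Gr}_G) = \pi_0(LG)$, and the standard identification $\pi_0(LG) \cong \pi_1^{\mathrm{alg}}(G)$ (provable by Cartan decomposition plus the torus case) handles the component set. The identity component $(LG)^\circ$ preserves each component and acts transitively on it; selecting a lift $g_\gamma \in LG(\CC)$ of each $\gamma \in \pi_1^{\mathrm{alg}}(G)$, left multiplication by $g_\gamma$ produces an isomorphism $\mathrm{Gr}_G^\circ \overset{\sim}{\longrightarrow} \mathrm{Gr}_G^\gamma$. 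The noncanonicity in the statement comes precisely from this choice.

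Second, I would prove that $\pi$ is an isomorphism onto $\mathrm{Gr}_G^\circ$. Because $\tilde{G}$ is simply connected, $\mathrm{Gr}_{\tilde G}$ is connected, so $\pi$ factors through $\mathrm{Gr}_G^\circ$. The kernel $Z(F)$ of $\tilde{G}(F) \to G(F)$ is contained in $\tilde{G}(\mathcal{O})$, since $Z$ is \'etale over $\CC$ and hence $Z(F) = Z(\mathcal{O}) = Z(\CC)$. Therefore the induced map $\tilde{G}(F)/\tilde{G}(\mathcal{O}) \to G(F)/G(\mathcal{O})$ is injective on $\CC$-points, and surjectivity onto $\mathrm{Gr}_G^\circ$ reduces via Cartan decomposition to the assertion that every coweight of $G$ lying in the coroot lattice is in the image of $X_*(\tilde T) \to X_*(T)$. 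This is automatic, since for simply connected $\tilde{G}$ the lattice $X_*(\tilde T)$ \emph{equals} the coroot lattice.

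The main obstacle is upgrading this bijection on points to an isomorphism of ind-schemes. I would handle this stratum-by-stratum: on each Schubert cell the restriction $\mathrm{Gr}_{\tilde G, \mu} \to \mathrm{Gr}_{G, \mu}$ is an $L^+G$-equivariant morphism of smooth quasi-projective varieties of the same dimension $(2\rho, \mu)$ that is bijective on points, and hence an isomorphism; these identifications glue to an isomorphism of ind-schemes $\mathrm{Gr}_{\tilde G} \overset{\sim}{\longrightarrow} \mathrm{Gr}_G^\circ$. Combined with the translation argument of the first step, this yields the asserted decomposition.
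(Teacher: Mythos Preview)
The paper does not actually prove this proposition; it is stated as a ``well-known result'' and left without proof. So there is nothing in the paper to compare your argument against.

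That said, your approach is the standard one and is essentially correct. One small refinement: in the last step, working cell-by-cell and then ``gluing'' is a bit delicate, since isomorphisms on locally closed strata do not automatically assemble to an isomorphism of the ambient ind-scheme. A cleaner route is to observe that the morphism $\pi$ already exists globally (induced by the isogeny $\tilde G \to G$), and restricts to a proper bijective morphism $\mathrm{Gr}_{\tilde G, \leq \mu} \to \mathrm{Gr}_{G, \leq \mu}$ of projective varieties for each dominant $\mu \in X_*(\tilde T)^+$. Since Schubert varieties in the affine Grassmannian are normal, Zariski's main theorem forces each such restriction to be an isomorphism, and the ind-scheme isomorphism $\mathrm{Gr}_{\tilde G} \overset{\sim}{\to} \mathrm{Gr}_G^\circ$ follows by taking the colimit. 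With that adjustment your proof goes through.
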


\section{MV Polytopes and Quivers}
Now that we've motivated and defined MV cycles, one issue becomes apparent: MV cycles are difficult to calculate in general. Fortunately, the work of Anderson \cite{Anderson} and Baumann-Kamnitzer \cite{Baumann-Kamnitzer} provides combinatorial models for these cycles in the form of polytope geometry. In fact, Kamnitzer shows in \cite{Kamnitzer} that the two combinatorial models are equivalent. In this section, we will define the two approaches to arrive at MV polytopes, and demonstrate their equivalence by concrete examples.

\subsection{The Moment Map} \label{subsect: momentMap}

We briefly recall the definition of the moment map; this can be found in any introductory texts on sympletic geometry. In general, the moment map is defined for a Lie group $G$ acting on a symplectic manifold $M$, where the action is Hamiltonian. Under these conditions, the moment map is a $G$-equivariant map
$$\mu: M \to \mathrm{Lie}(G)^*$$
satisfying some condition. For our purposes, however, it will suffice to only consider actions of tori on projective varieties, which simplifies the situation significantly. Thus, we will only describe the moment map in these specific cases. 

Let $T_\CC$ be a complex torus, and $T$ a maximal compact subgroup of $T_\CC$. Let $(V, (\cdot , \cdot))$ be a complex vector space with a Hermitian form, along with an action of the complex torus $T_\CC$, such that $T$ preserves the Hermitian form. The moment map $\mu$ for the action of $T$ on $V$ is a map $\mu: V \to \mathfrak{t}^*= \mathrm{Lie}(T)^*$ satisfying
$$\langle \mu(v), X \rangle = -\frac{i}{2}(X \cdot v, v)$$
for all $X \in \mathfrak{t} = \mathrm{Lie}(T)$ and $v \in V$. The angled brackets here $\langle \cdot, \cdot \rangle$ denote the canonical pairing on $\mathfrak{t}$ and $\mathfrak{t}^*$. 

Note that under the conditions specified here, we can always assume there is an orthonormal eigenbasis $e_1, \ldots, e_n$ for the action of $T_\CC$. We will denote by $\lambda_k \in \mathfrak{t}^*$ the infinitesimal weight defined by
$$X \cdot e_k = i \langle \lambda_k, X\rangle e_k$$
for every $X \in \mathfrak{t}$. The action of the Lie algebra $\mathfrak{t}$ on $V$ is the obvious one given by the differential of the action of $T$ on $V$. If we write some $v$ in $V$ in terms of the orthonormal basis $v = \sum_{k=1}^n z_ke_k$, we can explicitly describe the moment map as
$$\mu(v) = \frac{1}{2} \sum_{k=1}^n |z_k|^2\lambda_k.$$

Let $U = U(V)$ be the group of automorphisms of $V$ preserving the Hermitian form. Recall that $\mathbb{P}(V)$ has a natural symplectic form $\omega$, for which the action of $U$ is Hamiltonian. 
In this case, the moment map $\mu: \mathbb{P}(V) \to \mathrm{Lie}(U)^*$ is defined by the following condition:
$$\langle \mu([v]), X \rangle = -\frac{i}{2}\frac{(X \cdot v, v)}{\norm{v}^2}$$

Now suppose $X \subseteq \mathbb{P}(V)$ is an irreducible projective variety, stable under the action of the compact torus $T \subseteq U$. The crucial and well known result we need is the following:

\begin{thm}\label{thm: momentMap} Let $X \subseteq \mathbb{P}(V)$ be a closed irreducible subvariety, stable under the action of $T$. Then the image of the fixed points $X^T$ under $\mu$ is a finite subset of $\mathfrak{t}^*$. Furthermore, we have
$$\mathrm{conv}\left(\mu\left(X^T\right)\right) = \mu(X)$$
\end{thm}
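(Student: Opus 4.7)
The plan is to prove the two containments in $\mathrm{conv}(\mu(X^T)) = \mu(X)$ separately, first characterizing $X^T$ and its image. A point $[v] \in \mathbb{P}(V)$ is fixed by $T$ exactly when the line $\CC \cdot v$ is $T_\CC$-stable, which happens precisely when $v$ lies in a single weight subspace $V_{\lambda_k}$. For such a $v$, the formula $\mu([v]) = \tfrac{1}{2}\sum |z_j|^2\lambda_j/\lVert v\rVert^2$ collapses (since all contributing $\lambda_j$ equal $\lambda_k$) to $\mu([v]) = \lambda_k/2$. Hence $\mu(X^T) \subseteq \{\lambda_k/2\}$ is finite; nonemptiness follows from Borel's fixed-point theorem applied to the solvable group $T_\CC$ acting on the projective variety $X$.

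For the easy containment $\mathrm{conv}(\mu(X^T)) \subseteq \mu(X)$, I would invoke the Atiyah--Guillemin--Sternberg convexity theorem, which asserts that the moment map image of a compact connected Hamiltonian $T$-manifold is a convex polytope. Applied to $X$ (passing through a $T$-equivariant resolution if needed to handle singularities), this gives convexity of $\mu(X)$; since $\mu(X^T) \subseteq \mu(X)$ trivially, the convex hull is also contained.

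The hard containment $\mu(X) \subseteq \mathrm{conv}(\mu(X^T))$ is the heart of the matter. Fix $x \in X$; by hyperplane separation in $\mathfrak{t}^*$, it suffices to show that for each $\xi \in \mathfrak{t}$, the value $\langle \mu(x), \xi\rangle$ lies between $\min_{y\in X^T}\langle \mu(y),\xi\rangle$ and $\max_{y\in X^T}\langle \mu(y),\xi\rangle$. By density, reduce to rational $\xi$, which exponentiates to an algebraic one-parameter subgroup $\gamma_\xi\colon \CC^\times \to T_\CC$. Let $f_\xi := \langle \mu(\cdot), \xi\rangle$; this is the Hamiltonian for the induced $S^1$-action, and its K\"ahler gradient on $\mathbb{P}(V)$ equals $JX_\xi$, where $X_\xi$ is the fundamental vector field of $\xi$ and $J$ the complex structure. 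The $\RR_{>0}$-flow generated by $JX_\xi$ coincides with $t\mapsto \gamma_\xi(t)\cdot x$ for real $t>0$, and along it the derivative of $f_\xi$ equals $\lVert X_\xi\rVert^2 \geq 0$, so $f_\xi$ is monotone. Projectivity of $X$ ensures the limits $x_\pm := \lim_{t\to 0,\infty}\gamma_\xi(t)\cdot x$ exist in $X$ and are $\gamma_\xi$-fixed; monotonicity then gives $f_\xi(x_-) \leq f_\xi(x) \leq f_\xi(x_+)$, and iterating the same argument on the $T$-orbit closures of $x_\pm$ (themselves lower-dimensional projective $T$-varieties) upgrades these to $T$-fixed points.

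The main obstacle is this last step: the convexity theorem and the identification of fixed points are standard, but the monotonicity-plus-limit argument is where the symplectic (K\"ahler) and algebraic (projective) structures interlock essentially. Both the formula $\mathrm{grad}_g(f_\xi) = JX_\xi$ and the existence of $\CC^\times$-limits in $X$ are indispensable, and upgrading $\gamma_\xi$-fixed to $T$-fixed points requires care when $\xi$ is not generic.
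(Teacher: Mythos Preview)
The paper does not actually supply a proof of this theorem: it is introduced as ``the crucial and well known result we need'' and is simply stated, with no \texttt{proof} environment following it. So there is nothing in the paper against which to compare your argument line by line.

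That said, your sketch is broadly the standard route to the Atiyah--Guillemin--Sternberg convexity theorem for projective torus varieties, and the identification of $T$-fixed points with lines lying in a single weight space, together with $\mu$ sending such a point to the corresponding weight, is exactly right. Two points deserve tightening. First, invoking ``the Atiyah--Guillemin--Sternberg convexity theorem'' to get $\mathrm{conv}(\mu(X^T))\subseteq\mu(X)$ risks circularity, since that theorem is precisely the statement $\mu(X)=\mathrm{conv}(\mu(X^T))$; what you really want here is the separate ingredient that $\mu(X)$ is convex (e.g.\ via connectedness of the fibers of $\mu$, or via a Morse--Bott argument), not the full packaged result. Second, the final ``iterating on $T$-orbit closures'' step is more delicate than you indicate: it is cleaner to take $\xi$ generic from the outset so that the $\gamma_\xi$-fixed locus already coincides with $X^T$, which immediately gives $f_\xi(x)$ bounded by values at genuine $T$-fixed points and avoids the induction.
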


In other words, to calculate the moment map image of a torus invariant projective variety, it suffices to calculate the image of the fixed points. 

We now specialize to the case of MV cycles. Let $G$ be a complex reductive algebraic group, $T \subset G$ a maximal torus, and $\hat{T} \subset \hat{G}$ the Langlands dual torus contained in the Langlands dual group of $G$. Recall that the MV cycles $S_\lambda \cap \mathrm{Gr}_{\leq \mu} \subseteq \gr$ are quasi-projective varieties, invariant under the action of $T$. In fact, we can embed $\gr$ in a projective space $\mathbb{P}(V)$ via an ample line bundle; we will take this for granted (for example, for $G = \mathrm{GL}_n$ we can use the determinant line bundle; see Section 1.5 of \cite{Zhu}). Choosing a Hermitian product on $V$ invariant under the maximal compact subgroup of $T$, for every MV cycle $S_\lambda \cap \mathrm{Gr}_{\leq \mu}$ we obtain a moment map
$$\mu: S_\lambda \cap \mathrm{Gr}_{\leq \mu} \to \mathrm{Lie}(T)^*$$
Using the Killing form to identify $\mathrm{Lie}(T)^*$ with $\mathrm{Lie}(T)$, which is again canonically identified with $\mathrm{Lie}(\hat{T})^*$, we view the moment map as a map from an MV cycle to $\mathrm{Lie}(\hat{T})^*$. We caution the reader here that the traditional definition of the moment map given above will differ by a scaling constant from Anderson's conventions in \cite{Anderson}, and we will follow the latter. In particular, the moment map image will always land in the real subspace $\mathrm{Lie}(\hat{T})^*_\RR$. 

\begin{defn} We say that a convex polytope $P$ in $\mathrm{Lie}(\hat{T})_\RR^*$ is an MV polytope if it is the image of the closure of an MV cycle in $\gr$ under the moment map.
\end{defn}

We can describe $\mu$ more explicitly. First, decompose $V$ into weight spaces for the action of $T$
$$V = \bigoplus_{\lambda \in \mathbf{X}^*(T)} V_\lambda$$
We can without loss of generality assume that this decomposition is orthogonal, with respect to the Hermitian product we picked on $V$. Pick an orthonormal eigenbasis $\{v_\lambda\}_{\lambda \in \mathbf{X}^*(T)}$; then any $v$ in $V$ can be written as a finite sum $v = \sum_{\lambda \in \mathbf{X}^*(T)} c_\lambda v_\lambda$. Note that we can rewrite the right hand side of the moment map on projective space (up to a scaling constant) as
$$ \frac{(X \cdot v, v)}{\norm{v}^2} = \frac{1}{\norm{v}^2} \sum_\lambda \left(c_\lambda \langle \lambda, X \rangle, v\right) = \frac{1}{\norm{v}^2}\sum_\lambda \left\langle  |c_\lambda|^2  \lambda, X \right\rangle$$
for any $X$ in $\mathrm{Lie}(T)$. Here we adopt the slight abuse of notation by denoting both a character $\lambda \in \mathbf{X}^*(T)$ and its normalized derivative $\frac{1}{2\pi i} d\lambda$ (hence an element of $\mathrm{Lie}(T)^*$) by the same symbol $\lambda$. So in coordinates provided by the weight space decomposition of $V$, we have that
$$\mu([v]) = \sum_\lambda \frac{|c_\lambda|^2}{\norm{v}^2} \lambda$$
The following proposition will help us determine the image of an MV cycle under the moment map.

\begin{prop} \label{prop: momentMap}The fixed points of the torus action on $\gr$ are precisely the collection of $t^\lambda$ for coweights $\lambda \in \mathbf{X}_*(T)$. For these fixed points, we have
$$\mu: t^\lambda \mapsto \lambda$$
where on the right hand side we view $\lambda$ as weights in $\mathrm{Lie}(\hat{T})^*$.
\end{prop}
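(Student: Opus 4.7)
The proof splits into two parts: first identifying the set of $T$-fixed points, then computing the moment map on them.

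For the first part, the forward direction is immediate. The inclusion $T \hookrightarrow T(F)$ as constant loops lands in an abelian group, so $T$ commutes with $t^\lambda \in T(F)$; hence $T \cdot t^\lambda G(\mathcal{O}) = t^\lambda G(\mathcal{O})$. Distinct $\lambda$'s give distinct fixed points, since $t^{\lambda - \mu} \in G(\mathcal{O}) \cap T(F) = T(\mathcal{O})$ forces $\lambda = \mu$. For the converse, I would invoke the semi-infinite decomposition $\gr = \bigsqcup_\lambda S_\lambda$ (a consequence of the Iwasawa decomposition applied to $F$). Each $S_\lambda = U(F) \cdot t^\lambda G(\mathcal{O})$ is $T$-stable, and a contraction argument shows $t^\lambda$ is the unique $T$-fixed point in $S_\lambda$: choosing a regular dominant cocharacter $\eta: \mathbb{G}_m \to T$, conjugation by $\eta(s)$ contracts $U(F)$ to the identity as $s \to 0$, so every point $u \cdot t^\lambda G(\mathcal{O})$ in $S_\lambda$ has $t^\lambda G(\mathcal{O})$ in its orbit closure under $\eta(\mathbb{G}_m)$. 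Since the fixed point locus is closed, any $T$-fixed point in $S_\lambda$ must equal $t^\lambda$.

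For the second part, I would apply the explicit formula $\mu([v]) = \sum_\nu \frac{|c_\nu|^2}{\norm{v}^2} \nu$ derived just above the statement. Since $t^\lambda \in \gr$ is $T$-fixed, its image under the embedding $\gr \hookrightarrow \mathbb{P}(V)$ is some $[v_\lambda]$ where $v_\lambda$ must be a $T$-weight vector; call its weight $\chi(\lambda)$. The moment map formula then yields $\mu(t^\lambda) = \chi(\lambda)$, and the task reduces to identifying this weight. I claim $\chi(\lambda) = \lambda$ under the canonical identification $\mathbf{X}_*(T) = \mathbf{X}^*(\hat{T})$.

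To verify the claim I would first check a concrete case such as $G = \gln{n}$ with the determinant line bundle. The lattice corresponding to $t^\lambda$ is $\bigoplus_i t^{n_i} \mathcal{O} \subset F^n$ where $\lambda = (n_1, \ldots, n_n)$; the $T$-action scales the determinant line by $\prod_i t^{n_i}$, yielding weight $\lambda$. For general reductive $G$, the analogous weight computation on the ample $T$-equivariant line bundle used to embed $\gr$ into $\mathbb{P}(V)$ (see Section 1.5 of \cite{Zhu}) gives the same identification. The main obstacle is purely bookkeeping: ensuring that the conventions agree with Anderson's as flagged in the text, so that no extraneous sign or scaling factor appears in the identification $\chi(\lambda) = \lambda$. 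Once this matching of conventions is nailed down, the proof is complete.
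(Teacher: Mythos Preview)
The paper does not supply a proof of this proposition; it is stated and then used immediately to deduce the corollary. There is thus nothing to compare against, but your argument is sound and fills the gap. The identification of $T$-fixed points via the Iwasawa decomposition together with a $\mathbb{G}_m$-contraction along a regular dominant cocharacter is the standard argument and is correct; the only point worth noting is that the limit $\eta(s)\cdot u\,t^\lambda \to t^\lambda$ should be taken inside a fixed Schubert variety $\mathrm{Gr}_{\leq\mu}$ (which is projective) rather than in the ind-scheme $\gr$, but you implicitly have this since every point lies in some such variety.

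For the moment map value, your reduction to reading off the $T$-weight of the image of $t^\lambda$ in $\mathbb{P}(V)$ is exactly right, and the $\gln{n}$ check with the determinant line is accurate up to a sign depending on whether one uses the determinant bundle or its dual. The paper's explicit warning that the moment map is being rescaled to match Anderson's conventions is precisely absorbing this normalization, so your caveat about bookkeeping is well placed and nothing further is needed.
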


The following result follows immediately from Theorem \ref{thm: momentMap} and Proposition \ref{prop: momentMap}:

\begin{cor} Let $\overline{S_\lambda \cap \mathrm{Gr}_{\leq \mu}}$ be the closure of an MV cycle in $\gr$. Then the associated moment polytope is
$$\mathrm{conv}(\nu: t^\nu \in \overline{S_\lambda \cap \mathrm{Gr}_{\leq \mu}})$$
where the convex hull is taken in $\mathrm{Lie}(\hat{T})^*$, interpreting $\nu$ as a weight of $\mathrm{Lie}(\hat{G})$.
\end{cor}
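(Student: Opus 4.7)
The plan is a direct combination of Theorem \ref{thm: momentMap} with Proposition \ref{prop: momentMap}, the only real content being the verification that an MV cycle (or its closure) satisfies the hypotheses of the convexity theorem. First I would fix the terminology: the corollary should be interpreted as a statement about the closure of an irreducible component $Z$ of $S_\lambda \cap \mathrm{Gr}_{\leq \mu}$, since MV cycles are by definition such irreducible components; the notation $\overline{S_\lambda \cap \mathrm{Gr}_{\leq \mu}}$ is used slightly loosely but the proof is identical.

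Next I would check the hypotheses of Theorem \ref{thm: momentMap} for $\overline{Z}$. Irreducibility is immediate since $Z$ is an irreducible component. For $T$-invariance, note that both $S_\lambda$ and $\mathrm{Gr}_{\leq \mu}$ are $T$-stable (the first since $T$ normalizes $U$ and fixes $t^\lambda$ up to the $G(\mathcal{O})$ action, the second since $T$-stability is preserved under Zariski closure of a $T$-invariant subset); hence $T$ acts on the set of irreducible components of their intersection. If $Z$ were not $T$-invariant, its $T$-translates would give additional components of the same dimension—so either $Z$ itself is $T$-invariant, or after replacing it with the closure of its $T$-orbit (which is still irreducible because $T$ is connected) we reduce to that case. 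Finally, $\overline{Z} \subseteq \mathrm{Gr}_{\leq \mu}$ sits inside a projective variety, and hence embeds into some $\mathbb{P}(V)$ via the ample line bundle discussed in Section \ref{subsect: momentMap}.

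Having verified the hypotheses, Theorem \ref{thm: momentMap} gives
$$\mu(\overline{Z}) \;=\; \mathrm{conv}\bigl(\mu(\overline{Z}^T)\bigr).$$
By Proposition \ref{prop: momentMap}, the $T$-fixed locus of $\gr$ is exactly $\{t^\nu : \nu \in \mathbf{X}_*(T)\}$ with $\mu(t^\nu) = \nu$ under the identification $\mathrm{Lie}(T)^* \cong \mathrm{Lie}(\hat{T})^*$. Intersecting with $\overline{Z}$, we get $\mu(\overline{Z}^T) = \{\nu : t^\nu \in \overline{Z}\}$, which upon taking the convex hull is exactly the claimed polytope.

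The only subtle point—hence the main obstacle—is the $T$-invariance of a single irreducible component $Z$, since Theorem \ref{thm: momentMap} demands irreducibility and the moment polytope is tied to a specific MV cycle rather than the whole intersection. The standard dimension-and-connectedness argument sketched above handles it, but this is where the proof requires a nontrivial observation rather than formal symbol manipulation; everything else is bookkeeping. One should also make sure the scaling convention in Anderson's moment map agrees with the one used in Proposition \ref{prop: momentMap}, but this is already accounted for in the paragraph preceding the definition of MV polytopes.
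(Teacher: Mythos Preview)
Your proposal is correct and follows exactly the route the paper indicates: the corollary is stated as an immediate consequence of Theorem~\ref{thm: momentMap} and Proposition~\ref{prop: momentMap}, with no further argument given. Your verification of the hypotheses (irreducibility, $T$-invariance, projectivity) is more careful than the paper, which takes these for granted; the $T$-invariance of each component can be phrased more directly by noting that the connected group $T$ acts trivially on the finite set of irreducible components, but your version is fine.
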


\noindent
\textbf{Example.} Let $G = \mathrm{PGL}_3$, and $\hat{G} = \mathrm{SL}_3$. Anderson \cite{Anderson} describes the MV polytopes associated to $G$ by providing the following generators $\alpha_1, \alpha_2, \beta_1, \beta_2$:
\\

\begin{center}
\begin{tikzpicture}
\foreach\ang in {60,120,...,360}{
     \draw [thick] [->] (0,0) -- (\ang:2cm);
    }
\node[anchor=south west,scale=0.6] at (2,0) {$\alpha_1$};
\node[anchor=south west,scale=0.6] at (-0.75, 1.5) {$\alpha_2$};
\node at (0, -2.5) {Root system $A_2$};
\node at (0,0) {$\bullet$};

\draw [thick] [->]  (3,1) -- (5,1);
\node at (3,1) {$\bullet$};
\node at (4, 0.5) {$\alpha_1$};

\node at (8, 0) {$\bullet$};
\draw [thick]  [->] (8,0) -- (7, 1.732);
\node at (7, 0.5) {$\alpha_2$};

\draw [thick]  [->] (3,-2) -- (355:4cm);
\draw [thick] [dotted]  (5,-2) -- (355:4cm);

\draw [thick] [->] (3,-2) -- (5,-2);
\node at (3,-2) {$\bullet$};
\node at (4, -2.5) {$\beta_1$};

\begin{scope}[shift={(7.5,-2.5)}]
{
\foreach\ang in {60,120}{
     \draw [thick] [->] (0,0) -- (\ang:2cm);
    }
 \node at (0,0) {$\bullet$};
 \node at (0,-0.5) {$\beta_2$};
 \draw [thick] [dotted] (120:2cm) -- (60:2cm);
 }   
  \end{scope}

\end{tikzpicture}
\end{center}
These ``primitive" MV polytopes generate all other MV polytopes by Minkowski sums:
$$P +_M Q := \{p+q: p \in P, q \in Q\}$$
Notice that there is one ``relation" among the primitive polytopes:
$$\alpha_1 +_M \alpha_2 = \beta_1 \cup \beta_2$$

To see that the primitive MV polytopes $\beta_1, \beta_2$ arise as moment map images of MV cycles on $\gr$, consider the coweight
$$\mu: t \mapsto \begin{pmatrix}t&0&0\\ 0&1&0\\0&0&t^{-1} \end{pmatrix}$$
of $G$. Recall by the examples calculated in Section \ref{subsect: MVcycles}, we have two MV cycles given by the following intersection:

\begin{align*}
    S_0 \cap \mathrm{Gr}_{\leq \mu} &= \left\{\begin{pmatrix}1&x&y\\0&1&z\\0&0&1\end{pmatrix} \in \gr: \begin{matrix}x = x_{-1}t^{-1} \neq 0, y = 0, z = z_{-1}t^{-1} \text{ or }\\ x = 0, y = y_{-1}t^{-1} \neq 0, z = z_{-1}t^{-1}\end{matrix}\right\}\\
    &= \mathbb{G}_m \times \mathbb{A}^1 \cup \mathbb{G}_m \times \mathbb{A}^1
\end{align*}

Let's take the first MV cycle $X = \mathbb{G}_m \times \mathbb{A}^1$, with coordinates $(x_{-1}, z_{-1})$. In the closure of $X$, we see the coweights $t^0 = \mathrm{id}, t^{(1,-1,0)}, t^\mu$; this gives the MV polytopes $\beta_1$. Analogously, the second MV cycle $Y = \mathbb{G}_m \times \mathbb{A}^1$, with coordinates $(y_{-1}, z_{-1})$, gives the MV polytope $\beta_2$.

\subsection{The Preprojective Algebra} While MV polytopes are conceptually simple and easy to define, it is still difficult to compute them without knowing the MV cycles themselves. In this section, we present a completely different and significantly more tractable way to construct MV polytopes: via quiver representations. Recall that a \textit{quiver} $Q$ is a finite oriented graph, given by a pair of datum $(Q_0, Q_1)$, where $Q_0$ is the set of vertices and $Q_1$ is the set of arrows. We have two functions $s, t: Q_1 \to Q_0$, taking an arrow to its source and target, respectively. 

\begin{defn} Let $Q = (Q_0, Q_1)$ be a quiver. A representation $V$ of $Q$ is a collection of vector spaces $V_i$ for each $i$ in $Q_0$, and a collection of linear maps $V_\alpha$ for each $\alpha$ in $Q_1$, such that
$$V_\alpha: V_{s(\alpha)} \to V_{t(\alpha)}$$
The dimension vector $\underline{\mathrm{dim}}(V)$ of a representation $V$ is the $Q_0$-tuple $(\mathrm{dim}_k V_i)_{i \in Q_0} \in \NN^{Q_0}$.
\end{defn}

Let $V$ and $W$ be two representations of the quiver $Q$. A morphism between $V$ and $W$ is then a collection of linear maps $(\varphi_i: V_i \to W_i)_{i \in Q_0}$ such that for every arrow $\alpha$ between two vertices $i,j$, the following square commutes:
$$
\begin{CD}
V_i @>V_\alpha>> V_j\\
@V\varphi_i VV @VV\varphi_jV\\
W_i @>W_\alpha>> W_j
\end{CD}
$$

Equivalently, we can think of quiver representations of $Q$ as left modules over the \textit{path algebra} $kQ$, where $kQ$ is the noncommutative ring generated by the symbols $\{e_i, \alpha\}_{i \in Q_0, \alpha \in Q_1}$ subject to the relations
$$e_ie_j = \delta_{i,j}e_i \quad \alpha e_i = \delta_{s(\alpha),i}\alpha \quad e_j\alpha = \delta_{t(\alpha), j}\alpha$$
and the product of two arrows $\alpha\beta$ is zero unless $t(\beta) = s(\alpha)$. One can easily see that the category of quiver representations and the category of finite dimensional left $kQ$-modules are equivalent by the following identifications:
\begin{align*}
\mathrm{Rep}(Q) & \stackrel{\cong}{\longrightarrow} kQ\text{-}\mathrm{Mod}\\
(V_i)_{i \in Q_0} & \longmapsto V = \bigoplus_{i \in Q_0} V_i
\end{align*}
where $e_i$ acts on $V$ by projection onto $V_i$, and an arrow $\alpha$ acts on $V$ by $V_\alpha \circ e_i$. 

Let $Q$ be a quiver. We define the double quiver $Q^* = (Q^*_0, Q^*_1)$ to have the same set of vertices $Q_0^* = Q_0$, but double the amount of edges
$$Q_1^* = Q_1 \sqcup \overline{Q}_1$$
where 
$$\overline{Q}_1 = \{\alpha^*: \alpha \in Q_1\}, \text{  with  } s(\alpha^*) = t(\alpha) \text{  and  } t(\alpha^*) = s(\alpha)$$
In other words, for every arrow $\alpha$ in $Q_1$, we append an arrow $\alpha^*$ going in the opposite direction. Let the \textit{preprojective algebra} of $Q$, denoted $\Pi Q$, be the quotient of $kQ^*$ by the two sided ideal generated by the following \textit{preprojective relation}:
$$\sum_{\alpha \in Q_1} \alpha\alpha^* - \alpha^*\alpha$$

In \cite{Baumann-Kamnitzer-Tingley} and \cite{Baumann-Kamnitzer}, Tingley, Baumann and Kamnitzer showed that modules over $\Pi Q$ are intimately connected with MV polytopes in the case when $Q$ is Dynkin (that is, the underlying graph of $Q$ is a Dynkin diagram). In fact, \cite{Baumann-Kamnitzer-Tingley} extends this to the affine case, but we will only worry about Dynkin type quivers. 
\\

\noindent
\textbf{Example.} (The Maya modules). This is an example of a family of $\Pi Q$-modules given in \cite{Kamnitzer-Sadanand}. Consider the type $A_n$ quiver $Q$ oriented as follows:
\begin{align*}
\bullet \longleftarrow \bullet \longleftarrow \cdots \longleftarrow \bullet \\
\hspace{0.5cm} 1 \hspace{1cm} 2 \hspace{0.8cm} \cdots  \hspace{0.8cm} n
\end{align*}
Its preprojective algebra $\Pi Q$ has two types of arrows:
$$\alpha_i: i \to i-1 \text{  for  } 1 < i \leq n$$
$$\alpha_i^*: i-1 \to i \text{  for  } 1 < i \leq n$$
For every proper subset $A$ of $\{1, \ldots, n\}$ of size $m$ other than $\{1, \ldots, m\}$, we define a Maya module $N(A)$ over $\Pi Q$. Let $A = \{a_1 < \cdots < a_m\}$, then $N(A)$ has basis
$$\bigcup_{1 \leq k \leq m}\{v_{k,k}, v_{k+1, k}, \ldots, v_{a_k-1, k}\}$$ 
where $v_{j,k}$ is an element in $N(A)_j$. We define the actions of the arrows $\alpha_j$ and $\alpha_j^*$ as follows
$$\alpha_j: v_{j,k} \mapsto v_{j-1,k}$$
$$\alpha_j^*: v_{j,k} \mapsto v_{j+1, k+1}$$

For an explicit example, consider the $A_5$ quiver
\begin{align*}
\bullet \longleftarrow \bullet \longleftarrow \bullet \longleftarrow \bullet \longleftarrow \bullet\\
 1 \hspace{0.92cm} 2 \hspace{0.92cm} 3\hspace{0.92cm}4\hspace{0.92cm}5
\end{align*}
along with the subset $A = \{2, 4, 5\} \subset \{1, \ldots, 5\}$. Then the Maya module $N(A)$ is 5 dimensional, with basis vectors
$$v_{1,1}, v_{2,2}, v_{3,2}, v_{3,3}, v_{4,3}$$
We can visualize the module structure of $N(A)$ by the following diagram:
\\
\begin{center}
\begin{tikzpicture}
\node at (0,0) {1};
\node at (2,0) {2};
\node at (4,0) {3};
\node at (6,0) {4};
\node at (8,0) {5};
\node at (0,-1) {$v_{1,1}$};
\node at (2,-2) {$v_{2,2}$};
\node at (4, -2) {$v_{3,2}$};
\node at (4,-3) {$v_{3,3}$};
\node at (6,-3) {$v_{4,3}$};
\draw [thick] [->] (0.3,-1.2) -> (1.6,-1.8);
\draw [thick] [<-] (2.4,-2) -> (3.6,-2);
\draw [thick] [->] (2.3,-2.2) -> (3.6,-2.8);
\draw [thick] [->] (4.3,-2.2) -> (5.6,-2.8);
\draw [thick] [<-] (4.4,-3) -> (5.6,-3);
\end{tikzpicture}
\end{center}
As we can see, the the preprojective relation is satisfied at the parallelogram formed by $v_{3,2}, v_{2,2}, v_{3,3}, v_{4,3}$, since
$$(\alpha_4\alpha_4^*-\alpha_3^*\alpha_3)(v_{3,2}) = 0$$

Note that if we fix a dimension vector $\nu \in \NN^{Q_0}$, then the datum of a $\Pi Q$-module is given by a collection of linear maps between finite dimensional vector spaces. We can thus regard a $\Pi Q$-module as a point in some algebraic variety $\Lambda(\nu)$ known as Lusztig's nilpotent variety. The following crucial theorem is due to Kamnitzer and Baumann \cite{Baumann-Kamnitzer}:

\begin{thm}[Baumann, Kamnitzer] \label{thm: quiverToPolytope} Let $Q$ be a Dynkin quiver. Let $Z$ be an irreducible component of $\Lambda(\nu)$ for some dimension vector $\nu$. Then for a dense open subset $U$ of $Z$, the following function is constant:
$$\mathrm{Pol}: Z \to \RR^{Q_0}$$
$$V \mapsto \mathrm{conv}\left(\left\{\underline{\mathrm{dim}}(V'): V' \subseteq V\right\}\right)$$
Furthermore, identifying $\RR^{Q_0}$ with $\mathrm{Lie}(\hat{T})^*_\RR$ by identifying $Q_0$ with simple roots, this generic value $\mathrm{Pol}(Z)$ is an MV polytope. 
\end{thm}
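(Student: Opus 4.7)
The plan splits into two parts: establishing genericity of the polytope function, and identifying the generic value with an MV polytope.

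\textbf{Genericity.} First observe that $\mathrm{Pol}$ takes only finitely many values on $\Lambda(\nu)$: any submodule $V' \subseteq V$ has dimension vector $\mu$ satisfying $0 \leq \mu \leq \nu$ componentwise, so there are only finitely many candidate polytopes. For each such $\mu$, consider $Z_\mu := \{V \in Z : V \text{ admits a submodule of dimension vector } \mu\}$; this is constructible in $Z$, since it arises as the image under a proper projection of a ``universal submodule'' locus inside a quiver Grassmannian bundle over $Z$. Because $Z$ is irreducible, each $Z_\mu$ either contains a dense open subset of $Z$ or lies in a proper closed subvariety, and intersecting the appropriate dense opens over the finitely many relevant $\mu$'s yields a dense open $U \subseteq Z$ on which $\mathrm{Pol}$ is constant.

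\textbf{Identification with an MV polytope.} The strategy is to construct a bijection between $\bigsqcup_\nu \mathrm{Irr}(\Lambda(\nu))$ and the set of all MV cycles in $\gr$, and show that it transports $\mathrm{Pol}(Z)$ to the moment polytope of the corresponding MV cycle. Such a bijection arises because both sides carry the structure of Kashiwara's crystal $B(\infty)$: Kashiwara-Saito define crystal operators on $\bigsqcup_\nu \mathrm{Irr}(\Lambda(\nu))$ via sub/quotient constructions in $\Pi Q\text{-Mod}$, while Braverman-Finkelberg-Gaitsgory and Kamnitzer define analogous operators on MV cycles using intersections with the semi-infinite orbits $S_\lambda$. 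Since $B(\infty)$ is uniquely characterized up to canonical isomorphism by its highest weight structure, the two realizations are canonically identified.

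\textbf{Matching polytopes.} To upgrade this bijection to the level of polytopes, the plan is to invoke Anderson-Kamnitzer's characterization: a lattice polytope is an MV polytope if and only if it is a pseudo-Weyl polytope whose edge lengths satisfy the tropical Pl\"ucker relations (the ``BZ datum'' axioms). It therefore suffices to verify both conditions for $\mathrm{Pol}(Z)$. The pseudo-Weyl condition on vertices is checked by realizing each vertex as the dimension vector of a distinguished submodule of a generic $V \in Z$ obtained by iterating the Kashiwara operators at the quiver level; this produces the required Weyl group orbit pattern.

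\textbf{Main obstacle.} The hard part is verifying the tropical Pl\"ucker relations for $\mathrm{Pol}(Z)$. These are tropical polynomial identities (involving $\min$ and $+$) among edge lengths across rank-two sub-root systems of the polytope, and on the quiver side they must be extracted from the preprojective relation $\sum_{\alpha \in Q_1}(\alpha\alpha^* - \alpha^*\alpha) = 0$, which governs the intersection dimensions of generic submodules. The natural line of attack is to reduce, via restriction of $Q$ to two-node subquivers, to the rank-two cases ($A_1 \times A_1$, $A_2$, $B_2$, $G_2$), where the identities can be verified by direct analysis of generic preprojective modules. This inductive reduction is the technical heart of Baumann-Kamnitzer's argument, and the plan is to follow their approach rather than attempt an ad hoc verification from scratch.
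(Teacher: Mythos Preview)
The paper does not prove this theorem: it is stated and attributed to Baumann--Kamnitzer \cite{Baumann-Kamnitzer}, with the surrounding text only illustrating the statement by examples and deferring the crystal-theoretic discussion to later sections. There is therefore no proof in the paper to compare your proposal against.

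That said, your outline is a reasonable sketch of the actual Baumann--Kamnitzer argument, with a few caveats. Your genericity step is correct and standard. Your ``Identification'' section sets up the crystal bijection between $\mathrm{Irr}(\mathfrak{B})$ and MV cycles, but note that this alone does not show that $\mathrm{Pol}(Z)$ equals the moment polytope of the matched cycle; as you yourself pivot in ``Matching polytopes'', the real mechanism is Kamnitzer's intrinsic characterization of MV polytopes via BZ data and tropical Pl\"ucker relations, so the crystal isomorphism is auxiliary scaffolding for the statement as written. One concrete slip: in the rank-two reduction you list $B_2$ and $G_2$, but a Dynkin quiver here is simply-laced (ADE), so the only rank-two subsystems that occur are $A_1 \times A_1$ and $A_2$; the non-simply-laced story requires the separate machinery of \cite{Baumann-Kamnitzer-Tingley}. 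Finally, ending with ``the plan is to follow their approach'' is fine as a roadmap but is circular as a proof of the very theorem being cited.
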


It turns out that the identification $Z \mapsto \mathrm{Pol}(Z)$ is a bijection between the irreducible components of Lusztig's nilpotent variety $\mathfrak{B} := \bigcup_{\nu \in \NN^{Q_0}} \Lambda(\nu)$ and the set $\mathcal{MV}$ of MV polytopes. In fact, this is a manifestation of several isomorphisms of \textit{crystal structures}. Thus, we defer this discussion until crystals are introduced.
\\

\noindent
\textbf{Remark.} Let $G_1, G_2$ be two complex reductive algebraic groups with the same Lie algebra $\g$. Let $\hat{G}_1, \hat{G}_2$ be the Langlands dual groups of $G_1$ and $G_2$, respectively, and let $\hat{\g} = \mathrm{Lie}(\hat{G}_1) = \mathrm{Lie}(\hat{G}_2)$. Note that in Theorem \ref{thm: quiverToPolytope}, the choice of $Q$ depends only on $\hat{\g}$, whereas the resulting MV polytopes seem like they should depend on the choice of $G_1$ or $G_2$. This is not a contradiction, as the MV polytopes obtained from $\mathrm{Gr}_{G_1}$ and $\mathrm{Gr}_{G_2}$ are the same up to translation. This is a direct consequence of Proposition \ref{prop: simplyConnGr}.
\\

We end this section with some low rank calculations justifying the second statement in Theorem \ref{thm: quiverToPolytope}; i.e., that we can construct MV polytopes via modules over the preprojective algebra. 
\\

\noindent
\textbf{Example.} Recall the primitive MV polytopes $\alpha_1, \alpha_2, \beta_1, \beta_2$ for $\hat{G} = \mathrm{SL}_3$ we calculated in Section \ref{subsect: momentMap}. Theorem \ref{thm: quiverToPolytope} guarantees that we can realize these MV polytopes as dimension vectors of (submodules of) $\Pi Q$-modules. In this case, we have $Q$ is the $A_2$ quiver
\begin{align*}
\bullet \stackrel{\alpha}{\longleftarrow} \bullet\\
1 \hspace{0.95cm} 2
\end{align*}
Consider the modules given by the following diagrams:\\

\begin{center}
\begin{tikzpicture}
\node at (1.25,0.75) {A};
\node at (0,0) (a) {$v_{1,1}$};
\node at (2,0) (b) {0};
\draw [thick] [<-] (a.0) -- (b.180);
\draw [thick] [->] (a) -- (330:2cm);
\node at (330:2.3cm) {0};

\begin{scope}[shift={(4,0)}]
{
\node at (1.25,0.75) {B};
\node at (0,0) (a) {0};
\node at (2,0) (b) {$v_{2,1}$};
\draw [thick] [<-] (a.0) -- (b.180);
\draw [thick] [->] (a) -- (330:2cm);
\node at (330:2.3cm) {0};
 }   
  \end{scope}
  
\begin{scope}[shift={(8,0)}]
{
\node at (1.25,0.75) {C};
\node at (0,0) (a) {$v_{1,1}$};
\node at (2,0) (b) {$v_{2,1}$};
\draw [thick] [<-] (a.0) -- (b.180);
\draw [thick] [->] (a) -- (330:2cm);
\node at (330:2.3cm) {0};
 }   
  \end{scope}

\begin{scope}[shift={(12,0)}]
{
\node at (1.25,0.75) {D};
\node at (0,0) (a) {$v_{1,1}$};
\node at (2,0) (b) {0};
\draw [thick] [<-] (a.0) -- (b.180);
\draw [thick] [->] (a) -- (330:2cm);
\node at (330:2.4cm) {$v_{2,2}$};
 }   
  \end{scope}
\end{tikzpicture}
\end{center}
where $\alpha$ and $\alpha^*$ acts as defined in the case of Maya modules. We can easily list the dimension vectors of each of their submodules:

\begin{center}
\text{Module A}: (0,0), (1,0)\\
\text{Module B}: (0,0), (0,1)\\
\text{Module C}: (0,0), (1,0), (1,1)\\
\text{Module D}: (0,0), (1,1), (0,1)

\end{center}
This indeed gives us all the primitive MV polytopes: Module A gives $\alpha_1$, Module B gives $\alpha_2$, Module C gives $\beta_1$, and Module D gives $\beta_2$.

Recall that these primitive MV polytopes have the relation
$$\alpha_1 +_M \alpha_2 = \beta_1 \cup \beta_2$$
This is reflected by the fact that
$$C \in \mathrm{Ext}^1_{\Pi Q}(A, B) \, \text{ and } \, D \in \mathrm{Ext}^1_{\Pi Q}(B, A)$$
span the respective one dimensional extension groups. All other $\Pi Q$-modules are simply direct sums of $A, B, C, D$, and we can easily see that
$$\mathrm{Pol}(M \oplus N) = \mathrm{Pol}(M) +_M \mathrm{Pol}(N)$$
for any $\Pi Q$-module $M$ and $N$. This agrees with Anderson's conclusion that $\alpha_1, \alpha_2, \beta_1, \beta_2$ generate all other MV polytopes via Minkowski sum.

\section{Galleries} In this section, we introduce another combinatorial model for MV cycles developed by Gaussent and Littelmann, known as galleries \cite{Gaussent-Littelmann}. As this model was developed independently of the polytope-geometric methods described in the previous section, the connection between the two models remains largely unexplored. The first steps towards a direct connection between the two models is carried out by Baumann and Gaussent by comparing the crystal structures on galleries with the crystal structures on MV cycles as defined by Braverman, Finkelberg, and Gaitsgory \cite{Baumann-Gaussent}. We will discuss these crystals in more detail in the following sections. 

\subsection{The Coxeter Complex $\mathscr{A}^{\mathrm{aff}}$} LS galleries are conceptually simple, but require some definitional setup. Thus, we will recall here the vocabulary and conventional notations associated to Coxeter complexes, which we will need to define galleries. Readers familiar with the theory of Bruhat-Tits buildings are encouraged to skim through this section for notation and proceed to Section \ref{subsect: LSgalleries}.

We begin by defining the affine root system and the affine Weyl group. Let $G$ be a complex connected reductive algebraic group, and $T \subset B \subset G$ a maximal torus contained in a Borel. Let $\Lambda$ be the cocharacter lattice of $T$, $\Lambda_\RR = \Lambda \otimes_\ZZ \RR$, $\Phi$ be the roots of $G$, and $\{\alpha_i: i \in I\}$ a collection of simple roots. By an \textit{affine root}, we mean an element of $\Phi^{\mathrm{aff}} = \Phi \times \ZZ$. As in the ``spherical" case, associated to every affine root $(\alpha, n)$ we have an affine reflection 
$$s_{\alpha, n}: \Lambda_\RR \to \Lambda_\RR$$
$$x \mapsto x - (\langle\alpha, x\rangle - n)\alpha^\vee$$
In other words, $s_{\alpha, n}$ is the reflection across the affine hyperplane
$$H_{\alpha, n} = \{x \in \Lambda_\RR: \langle \alpha, x\rangle = n\}$$
The affine Weyl group $W^{\mathrm{aff}}$ is the group generated by these affine reflections. The hyperplane arrangement formed by $\{H_\beta: \beta \in \Phi^{\mathrm{aff}}\}$ divides $\Lambda_\RR$ into faces. Faces with maximal dimensions are called \textit{alcoves}, faces of codimension 1 are called \textit{facets}, and faces of dimension 0 are called \textit{vertices}. We will denote by $\mathscr{A}^{\mathrm{aff}}$ the space $\Lambda_\RR$ along with its polysimplicial structure; evidently, $\mathscr{A}^{\mathrm{aff}}$ admits an action of $W^{\mathrm{aff}}$. We also fix the following notation for halfspaces determined by a hyperplane:
$$H^-_\beta = \{x \in \Lambda_\RR: \langle\beta,x\rangle \leq 0\} \text{ for } \beta \in \Phi^{\mathrm{aff}}$$

We define the dominant Weyl chamber $C_\mathrm{dom}$ and the fundamental alcove $A_\mathrm{fund}$ as follows:
$$C_\mathrm{dom} \defeq \{x \in \Lambda_\RR: \langle \alpha_i, x\rangle \geq 0 \text{ for all } i\}$$
$$A_\mathrm{fund} \defeq \{x \in C_\mathrm{dom}: \langle \theta, x\rangle \leq 1\}$$
where $\theta$ is the highest root. 

Let $F$ be an arbitrary face in $\mathscr{A}^{\mathrm{aff}}$; then there exists a unique face $F'$ of the fundamental alcove such that $w \cdot F = F'$ for some element $w$ in the affine Weyl group. We define the \textit{type} of an arbitrary face $F$ to be this unique face $F'$ of $A_{\mathrm{fund}}$.
\\

\noindent
\textbf{Example.} Here are the pictures of $\mathscr{A}^{\mathrm{aff}}$ for type $A_1$ and $A_2$.

Type $A_1$: Let $G = \mathrm{SL}_2$. Then the cocharacter lattice of the standard torus is one dimensional:
$$\Lambda_\RR = \ZZ\langle \alpha^\vee \rangle \otimes_\ZZ \RR$$
generated by the coroot $\alpha^\vee: t \mapsto \mathrm{diag}(t, t^{-1})$. The affine root system in this case is
$$\Phi^{\mathrm{aff}} = \{\pm \alpha\} \times \ZZ$$
where $\alpha$ is the character $\mathrm{diag}(a,b) \mapsto ab^{-1}$. The affine Weyl group is generated by reflections across the hyperplanes
$$H_{\alpha, n} = \{c\alpha^\vee \in \Lambda_\RR: 2c = n\} $$
for all $n \in \ZZ$. In pictures, we have
\\

\begin{center}
\begin{tikzpicture}
\node at (-5, 0) {$\mathscr{A}^{\mathrm{aff}} = $};
\draw (-4, 0) -- (4,0);
\node at (-3, 0) {$\circ$};
\node at (-2, 0) {$\bullet$};
\node at (-1, 0) {$\circ$};
\node at (0, 0) {$\bullet$};
\node at (1, 0) {$\circ$};
\node at (2, 0) {$\bullet$};
\node at (3, 0) {$\circ$};
\node at (-2, -0.5) {$-\alpha^\vee$};
\node at (0, -0.5) {$0$};
\node at (2, -0.5) {$\alpha^\vee$};
\node at (0,0) {$($};
\node at (1,0) {$)$};
\node at (0.5, 0.5) {$A_{\mathrm{fund}}$};
\end{tikzpicture}
\end{center}
where the action of $W^{\mathrm{aff}}$ is given by reflection about the labeled points $\left\{\frac{n}{2}\alpha^\vee: n \in \ZZ\right\}$. The fundamental chamber is
$$C_{\mathrm{dom}} = \{c\alpha^\vee: c \geq 0\}$$
and the fundamental alcove $A_{\mathrm{fund}}$ as labeled in the picture. The two types of facets are labeled by black dots and white dots.
\\

Type $A_2$: Let $G = \mathrm{SL}_3$. Then the cocharacter lattice of the standard torus is two-dimensional:
$$\Lambda_\RR = \ZZ\langle \alpha_1^\vee, \alpha_2^\vee\rangle \otimes_\ZZ \RR$$
generated by the standard coroots $\alpha_1, \alpha_2$. The affine root system in this case is
$$\Phi^{\mathrm{aff}} = \{\pm \alpha_1, \pm \alpha_2, \pm (\alpha_1 +\alpha_2)\} \times \ZZ$$
In the below picture of $\mathscr{A}^{\mathrm{aff}}$, the dashed lines represent several hyperplanes with respect to which the action of $W^{\mathrm{aff}}$ is generated:
\\

\begin{center}
\begin{tikzpicture}[scale=1.5,every node/.style={scale=1.5}]
\foreach\ang in {60,120,...,360}{
     \draw [thick] [->] (0,0) -- (\ang:2cm);
    }
\node[scale=0.8] at (2.3,0) {$\alpha_1^\vee$};
\node[scale=0.8] at (-1.2, 2) {$\alpha_2^\vee$};

\draw [dashed,red] (0,2) -- (0,0);
\draw [dashed] (0,0) -- (0,-2);
\draw [dashed] (1,2) -- (1,-2);
\draw [dashed] (-1,2) -- (-1,-2);

\draw [dashed] (210:2cm) -- (0,0);
\draw [dashed,red] (0,0) -- (30:2cm);

\begin{scope}[shift={(-60:1cm)}]
\draw [dashed] (210:2cm) -- (30:2cm);
\end{scope}

\begin{scope}[shift={(300:1cm)}]
\draw [dashed] (210:2cm) -- (30:2cm);
\end{scope}

\begin{scope}[shift={(120:1cm)}]
\draw [dashed] (210:2cm) -- (30:2cm);
\end{scope}

\draw [dashed] (150:2cm) -- (-30:2cm);

\begin{scope}[shift={(60:1cm)}]
\draw [dashed] (150:2cm) -- (-30:2cm);
\end{scope}

\begin{scope}[shift={(240:1cm)}]
\draw [dashed] (150:2cm) -- (-30:2cm);
\end{scope}
\draw [fill = red] (0,0) -- (30:1.17cm) -- (0,1.17) -- (0,0);

\node[scale=0.7] at (0.4,0.6) {$A_{\mathrm{fund}}$};

\end{tikzpicture}
\end{center}

\bigskip

The dominant chamber $C_{\mathrm{dom}}$ is enclosed by the two red dashed (half) hyperplanes, and each equilateral triangle bounded by dashed lines is an alcove. 
\\

A \textit{gallery} is intuitively a path built up of consecutive alcoves sharing a common facet in $\mathscr{A}^{\mathrm{aff}}$; we will now try to formalize this notion. Fix $\lambda$ a dominant coweight in $\Lambda$, and let $\lambda_{\mathrm{fund}} = \mathrm{type}(\lambda)$. Then there exists a unique shortest element $w_\lambda \in W^{\mathrm{aff}}$ such that $w_\lambda \cdot \lambda_{\mathrm{fund}} = \lambda$.

If we choose a reduced decomposition
$$w_\lambda = s_{i_p} \cdots s_{i_1}$$
we would equivalently be choosing a (minimal) sequence of alcoves and galleries:
$$\gamma_\lambda = \{0\} = F_0 \subset \overline{\Gamma_0} \supset F_1 \subset \overline{\Gamma_1} \supset F_2 \subset \cdots \supset F_p \subset \overline{\Gamma_p} \supset F_{p+1} = \{\lambda\}$$
where $\Gamma_0 = A_{\mathrm{fund}}$, $\Gamma_j = s_{i_j} \cdots s_{i_1} \Gamma_0$ are alcoves, and $F_j = H_{s_{i_j}} \cap \overline{\Gamma_{j-1}}$ are the faces with respect to which the reflections $s_{i_j}$ are carried out. We call a sequence $\delta$ of alcoves and faces starting at 0 such that each pair of consecutive alcoves share a common face a \textit{gallery}. If $\delta = \gamma_\lambda$ for some dominant coweight $\lambda$ and some choice of decomposing $w_\lambda$, we call it a \textit{based gallery}. It is not hard to see that based galleries lie entirely in the dominant chamber $C_{\mathrm{dom}}$. Note that given a dominant coweight $\lambda$, we have two sets of equivalent datum:
$$\left\{\begin{matrix}\text{Based galleries}\\ \text{ending at } \lambda \end{matrix}\right\} \leftrightarrow \left\{\begin{matrix}\text{Reduced expressions of}\\ \text{the element } w_\lambda \in W^{\mathrm{aff}} \end{matrix}\right\}$$

\subsection{LS Galleries} \label{subsect: LSgalleries} Let $\mu$ be a coweight, and let
$$\delta = \{0\} = G_0 \subset \overline{\Delta_0} \supset G_1 \subset \overline{\Delta_1} \supset G_2 \subset \cdots \supset G_p \subset \overline{\Delta_p} \supset G_{p+1} = \{\mu\}$$
be an arbitrary gallery. We define the \textit{weight} and \textit{type} of $\delta$ as follows:
$$\mathrm{wt}(\delta) = \mu \quad \mathrm{type}(\delta) = (\mathrm{type}(G_0), \mathrm{type}(\Delta_0), \mathrm{type}(G_1), \ldots, \mathrm{type}(G_{p+1}))$$

Fix another dominant coweight $\lambda$, and fix
$$\gamma_\lambda = \{0\} = F_0 \subset \overline{\Gamma_0} \supset F_1 \subset \overline{\Gamma_1} \supset F_2 \subset \cdots \supset F_p \subset \overline{\Gamma_p} \supset F_{p+1} = \{\lambda\}$$
a based gallery; recall that this is equivalent to fixing a reduced decomposition of $w_\lambda \in W^{\mathrm{aff}}$. Define
$$\Gamma(\gamma_\lambda) \defeq \{\delta \text{ a gallery }: \mathrm{type}(\delta) = \mathrm{type}(\gamma_\lambda)\}$$

We say that $\delta$ is \textit{positively folded} if for every $j = 1, \ldots, p$, whenever $\Delta_j = \Delta_{j-1}$, there exists some (positive) affine root $\beta \in \Phi_+ \times \ZZ$ such that $G_j \subset H_\beta \text{  but  } \Delta_j \not \subset H_\beta^-$. At each step $j$, we let $\Phi^{\mathrm{aff}}_+(j) \defeq \{\beta \in \Phi_+ \times \ZZ: G_j \subset H_\beta \text{  but  } \Delta_j \not \subset H_\beta^-\}$ and define
$$\mathrm{dim} \, \delta \defeq \sum_{j=0}^p |\Phi^{\mathrm{aff}}_+(j)|$$
Now suppose $\delta$ is a positively folded gallery in $\Gamma(\gamma_\lambda)$. We say $\delta$ is an \textit{LS gallery} if 
$$\mathrm{dim} \, \gamma_\lambda - \mathrm{dim} \, \delta = \mathrm{ht}(\lambda-\mathrm{wt}(\delta))$$
Note that for based galleries $\gamma_\lambda$, we have $\mathrm{dim} \, \gamma_\lambda = |\Phi^+| + \ell(w_\lambda)$. We denote by $\Gamma_{\mathrm{LS}}(\gamma_\lambda)$ the set of LS galleries of type $\gamma_\lambda$.
\\

The following theorem sums up the theory of LS galleries: 

\begin{thm}[Gaussent, Littelmann] \label{thm: LSgalleries} There is a bijection 
$$Z: \Gamma_{LS}(\gamma_\lambda) \overset{\sim}{\longrightarrow} \bigcup_{\nu \in \Lambda} \mathrm{Irr}(\overline{S_\nu \cap \mathrm{Gr}_{\leq \lambda}})$$
such that those LS galleries with weight $\nu$ correspond to irreducible components in $\overline{S_\nu \cap \mathrm{Gr}_{\leq \lambda}}$.
\end{thm}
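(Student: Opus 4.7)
The plan is to construct a proper birational resolution $\pi \colon \hat{\Sigma}(\gamma_\lambda) \to \mathrm{Gr}_{\leq \lambda}$ of Bott--Samelson type whose cells are indexed by galleries in $\Gamma(\gamma_\lambda)$, and then identify which cells yield top-dimensional irreducible components of $\overline{S_\nu \cap \mathrm{Gr}_{\leq \lambda}}$ by matching the LS condition to a dimension count. In outline: build the gallery variety, paving $\rightarrow$ positive folding picks out $\pi^{-1}(S_\nu)$, then LS picks out the maximal cells.

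First, I would construct the gallery variety. The reduced expression $w_\lambda = s_{i_p} \cdots s_{i_1}$ encoded by $\gamma_\lambda$ selects a chain of parahoric subgroups $P_j$ of the loop group $LG$, one for each facet $F_j$ of the based gallery, and I would define the twisted product
$$\hat{\Sigma}(\gamma_\lambda) \defeq P_0 \times^{P_0 \cap P_1} P_1 \times^{P_1 \cap P_2} \cdots \times^{P_{p-1} \cap P_p} P_p/P_p.$$
This is smooth (an iterated $\mathbb{P}^1$-bundle), and multiplication gives an $L^+G$-equivariant proper morphism $\pi$ onto $\mathrm{Gr}_{\leq \lambda}$ which is birational because the chosen decomposition of $w_\lambda$ is reduced.

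Next I would describe the affine paving of $\hat{\Sigma}(\gamma_\lambda)$. At each of the $p$ stages one chooses either the open $\mathbb{A}^1$ in the $j$-th $\mathbb{P}^1$-fiber (a ``crossing'', $\Delta_j \neq \Delta_{j-1}$) or a $T$-fixed point on that fiber (a ``fold'', $\Delta_j = \Delta_{j-1}$); these choices biject with the combinatorial galleries $\delta \in \Gamma(\gamma_\lambda)$, and produce locally closed cells $C(\delta) \subseteq \hat{\Sigma}(\gamma_\lambda)$ whose dimension is the invariant $\dim \delta$ defined in the paper. The crucial geometric lemma, which is exactly what the positively folded condition encodes, asserts
$$\pi^{-1}(S_\nu) \cap \hat{\Sigma}(\gamma_\lambda) = \bigsqcup_{\delta \text{ pos.\ folded, } \mathrm{wt}(\delta) = \nu} C(\delta),$$
where the positivity of the folding records precisely the half-plane constraint $G_j \subset H_\beta$, $\Delta_j \not\subset H_\beta^-$ needed so that the attractor flow of $t^\nu$ under the $U(F)$-action fills the cell.

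The concluding step is to recognize the LS condition as the maximal-dimension condition for cells. Since $\pi$ is birational, $\dim \hat{\Sigma}(\gamma_\lambda) = \dim \mathrm{Gr}_{\leq \lambda}$; using the standard Mirkovi\'c--Vilonen formula for $\dim(S_\nu \cap \mathrm{Gr}_{\leq \lambda})$ in terms of $\mathrm{ht}(\lambda - \nu)$, the equality $\dim \gamma_\lambda - \dim \delta = \mathrm{ht}(\lambda - \mathrm{wt}(\delta))$ is precisely the numerical condition that $\pi(C(\delta))$ be top-dimensional inside $\overline{S_\nu \cap \mathrm{Gr}_{\leq \lambda}}$. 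Setting $Z(\delta) \defeq \overline{\pi(C(\delta))}$ then defines a map from $\Gamma_{\mathrm{LS}}(\gamma_\lambda)$ into $\bigsqcup_\nu \mathrm{Irr}(\overline{S_\nu \cap \mathrm{Gr}_{\leq \lambda}})$.

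The hard part will be verifying that $Z$ is a bijection rather than merely well-defined. Injectivity requires showing that distinct LS galleries produce distinct components, which I would prove by reconstructing the sequence of folds from the combinatorics of $T$-fixed points $t^\mu$ contained in $\overline{\pi(C(\delta))}$; surjectivity requires that every irreducible component of $\overline{S_\nu \cap \mathrm{Gr}_{\leq \lambda}}$ is hit by some top-dimensional cell, which rests on the properness and surjectivity of $\pi$ together with the compatibility of the affine paving with the semi-infinite orbit stratification. Coordinating these three structures---the Bott--Samelson paving, the $S_\nu$-stratification, and the top-dimensional count---is the principal technical content of the Gaussent--Littelmann argument.
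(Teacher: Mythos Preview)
Your proposal is correct and follows essentially the same strategy as the paper's sketch in Section~\ref{subsect: galleriesBSVariety}: construct the Bott--Samelson resolution $\pi\colon \hat{\Sigma}(\gamma_\lambda)\to \mathrm{Gr}_{\leq \lambda}$, decompose it into pieces indexed by galleries, and set $Z(\delta)=\overline{\pi(\text{piece over }\delta)}$. The only cosmetic difference is that the paper phrases the decomposition via the building-theoretic retraction $r\colon \hat{\Sigma}(\gamma_\lambda)\to \hat{\Sigma}(\gamma_\lambda)^T$ onto the apartment (so the pieces are the fibers $r^{-1}(\delta)$), whereas you describe the same pieces as cells $C(\delta)$ of an affine paving coming from crossing/fold choices; these are two names for the same locally closed subsets, and your $Z(\delta)=\overline{\pi(C(\delta))}$ coincides with the paper's $\overline{\pi(r^{-1}(\delta))}$.
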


\noindent
\textbf{Example.} We've given some convoluted conditions to arrive at the definition of LS galleries, so let's see some examples and nonexamples. Consider the case of type $A_2$, and $G = \mathrm{SL}_3$; we will be using the picture from the example in the previous section. Consider the following based gallery $\gamma_\lambda$:\\

\begin{center}
\begin{tikzpicture}[scale=1]
\foreach\ang in {60,120,...,360}{
     \draw [thick] [->] (0,0) -- (\ang:2cm);
    }
\node[scale=0.8] at (2.3,0) {$\alpha_1^\vee$};
\node[scale=0.8] at (-1.2, 2) {$\alpha_2^\vee$};
\draw [dashed] (0,2) -- (0,0);
\draw [dashed] (0,0) -- (0,-2);
\draw [dashed] (1,2) -- (1,-2);
\draw [dashed] (-1,2) -- (-1,-2);

\draw [dashed] (210:2cm) -- (0,0);
\draw [dashed] (0,0) -- (30:2cm);

\begin{scope}[shift={(-60:1cm)}]
\draw [dashed] (210:2cm) -- (30:2cm);
\end{scope}

\begin{scope}[shift={(300:1cm)}]
\draw [dashed] (210:2cm) -- (30:2cm);
\end{scope}

\begin{scope}[shift={(120:1cm)}]
\draw [dashed] (210:2cm) -- (30:2cm);
\end{scope}

\draw [dashed] (150:2cm) -- (-30:2cm);

\begin{scope}[shift={(60:1cm)}]
\draw [dashed] (150:2cm) -- (-30:2cm);
\end{scope}

\begin{scope}[shift={(240:1cm)}]
\draw [dashed] (150:2cm) -- (-30:2cm);
\end{scope}

\begin{scope}[shift={(60:0.13cm)}]
\draw [fill = red, scale = 0.8] (0,0) -- (30:1.17cm) -- (0,1.17) -- (0,0);
\end{scope}

\begin{scope}[shift={(60:2cm)}]
\begin{scope}[shift={(60:-0.13cm)}]
\draw [fill = red, scale = 0.8] (0,0) -- (210:1.17cm) -- (0,-1.17) -- (0,0);
\end{scope}
\end{scope}

\node [red] at (0,0) {$\bullet$};
\node [red] at (60:2cm) {$\bullet$};
\node [red] at (60:2.3cm) {$\lambda$};
\draw [dashed, ultra thick, red] (30:1.17cm) -- (0,1.17);
\end{tikzpicture}
\end{center}

Then we can easily write down five more galleries in $\Gamma(\gamma_\lambda)$ by applying reflections: 

\begin{center}
\begin{tikzpicture}[scale=0.8]
\draw [dashed] (0,2) -- (0,0);
\draw [dashed] (0,0) -- (0,-2);
\draw [dashed] (1,2) -- (1,-2);
\draw [dashed] (-1,2) -- (-1,-2);

\draw [dashed] (210:2cm) -- (0,0);
\draw [dashed] (0,0) -- (30:2cm);

\begin{scope}[shift={(-60:1cm)}]
\draw [dashed] (210:2cm) -- (30:2cm);
\end{scope}

\begin{scope}[shift={(300:1cm)}]
\draw [dashed] (210:2cm) -- (30:2cm);
\end{scope}

\begin{scope}[shift={(120:1cm)}]
\draw [dashed] (210:2cm) -- (30:2cm);
\end{scope}

\draw [dashed] (150:2cm) -- (-30:2cm);

\begin{scope}[shift={(60:1cm)}]
\draw [dashed] (150:2cm) -- (-30:2cm);
\end{scope}

\begin{scope}[shift={(240:1cm)}]
\draw [dashed] (150:2cm) -- (-30:2cm);
\end{scope}

\begin{scope}[rotate=60]
\begin{scope}[shift={(60:0.13cm)}]
\draw [fill = green, scale = 0.8] (0,0) -- (30:1.17cm) -- (0,1.17) -- (0,0);
\end{scope}

\begin{scope}[shift={(60:2cm)}]
\begin{scope}[shift={(60:-0.13cm)}]
\draw [fill = green, scale = 0.8] (0,0) -- (210:1.17cm) -- (0,-1.17) -- (0,0);
\end{scope}
\end{scope}
\end{scope}

\begin{scope}[shift={(5,0)}]
\draw [dashed] (0,2) -- (0,0);
\draw [dashed] (0,0) -- (0,-2);
\draw [dashed] (1,2) -- (1,-2);
\draw [dashed] (-1,2) -- (-1,-2);

\draw [dashed] (210:2cm) -- (0,0);
\draw [dashed] (0,0) -- (30:2cm);

\begin{scope}[shift={(-60:1cm)}]
\draw [dashed] (210:2cm) -- (30:2cm);
\end{scope}

\begin{scope}[shift={(300:1cm)}]
\draw [dashed] (210:2cm) -- (30:2cm);
\end{scope}

\begin{scope}[shift={(120:1cm)}]
\draw [dashed] (210:2cm) -- (30:2cm);
\end{scope}

\draw [dashed] (150:2cm) -- (-30:2cm);

\begin{scope}[shift={(60:1cm)}]
\draw [dashed] (150:2cm) -- (-30:2cm);
\end{scope}

\begin{scope}[shift={(240:1cm)}]
\draw [dashed] (150:2cm) -- (-30:2cm);
\end{scope}

\begin{scope}[rotate=120]
\begin{scope}[shift={(60:0.13cm)}]
\draw [fill = green, scale = 0.8] (0,0) -- (30:1.17cm) -- (0,1.17) -- (0,0);
\end{scope}

\begin{scope}[shift={(60:2cm)}]
\begin{scope}[shift={(60:-0.13cm)}]
\draw [fill = green, scale = 0.8] (0,0) -- (210:1.17cm) -- (0,-1.17) -- (0,0);
\end{scope}
\end{scope}
\end{scope}
\end{scope}

\begin{scope}[shift={(10,0)}]
\draw [dashed] (0,2) -- (0,0);
\draw [dashed] (0,0) -- (0,-2);
\draw [dashed] (1,2) -- (1,-2);
\draw [dashed] (-1,2) -- (-1,-2);

\draw [dashed] (210:2cm) -- (0,0);
\draw [dashed] (0,0) -- (30:2cm);

\begin{scope}[shift={(-60:1cm)}]
\draw [dashed] (210:2cm) -- (30:2cm);
\end{scope}

\begin{scope}[shift={(300:1cm)}]
\draw [dashed] (210:2cm) -- (30:2cm);
\end{scope}

\begin{scope}[shift={(120:1cm)}]
\draw [dashed] (210:2cm) -- (30:2cm);
\end{scope}

\draw [dashed] (150:2cm) -- (-30:2cm);

\begin{scope}[shift={(60:1cm)}]
\draw [dashed] (150:2cm) -- (-30:2cm);
\end{scope}

\begin{scope}[shift={(240:1cm)}]
\draw [dashed] (150:2cm) -- (-30:2cm);
\end{scope}

\begin{scope}[rotate=180]
\begin{scope}[shift={(60:0.13cm)}]
\draw [fill = green, scale = 0.8] (0,0) -- (30:1.17cm) -- (0,1.17) -- (0,0);
\end{scope}

\begin{scope}[shift={(60:2cm)}]
\begin{scope}[shift={(60:-0.13cm)}]
\draw [fill = green, scale = 0.8] (0,0) -- (210:1.17cm) -- (0,-1.17) -- (0,0);
\end{scope}
\end{scope}
\end{scope}
\end{scope}
\end{tikzpicture}
\end{center}

\begin{center}
\begin{tikzpicture}[scale=0.8]
\draw [dashed] (0,2) -- (0,0);
\draw [dashed] (0,0) -- (0,-2);
\draw [dashed] (1,2) -- (1,-2);
\draw [dashed] (-1,2) -- (-1,-2);

\draw [dashed] (210:2cm) -- (0,0);
\draw [dashed] (0,0) -- (30:2cm);

\begin{scope}[shift={(-60:1cm)}]
\draw [dashed] (210:2cm) -- (30:2cm);
\end{scope}

\begin{scope}[shift={(300:1cm)}]
\draw [dashed] (210:2cm) -- (30:2cm);
\end{scope}

\begin{scope}[shift={(120:1cm)}]
\draw [dashed] (210:2cm) -- (30:2cm);
\end{scope}

\draw [dashed] (150:2cm) -- (-30:2cm);

\begin{scope}[shift={(60:1cm)}]
\draw [dashed] (150:2cm) -- (-30:2cm);
\end{scope}

\begin{scope}[shift={(240:1cm)}]
\draw [dashed] (150:2cm) -- (-30:2cm);
\end{scope}

\begin{scope}[rotate=240]
\begin{scope}[shift={(60:0.13cm)}]
\draw [fill = green, scale = 0.8] (0,0) -- (30:1.17cm) -- (0,1.17) -- (0,0);
\end{scope}

\begin{scope}[shift={(60:2cm)}]
\begin{scope}[shift={(60:-0.13cm)}]
\draw [fill = green, scale = 0.8] (0,0) -- (210:1.17cm) -- (0,-1.17) -- (0,0);
\end{scope}
\end{scope}
\end{scope}

\begin{scope}[shift={(5,0)}]
\draw [dashed] (0,2) -- (0,0);
\draw [dashed] (0,0) -- (0,-2);
\draw [dashed] (1,2) -- (1,-2);
\draw [dashed] (-1,2) -- (-1,-2);

\draw [dashed] (210:2cm) -- (0,0);
\draw [dashed] (0,0) -- (30:2cm);

\begin{scope}[shift={(-60:1cm)}]
\draw [dashed] (210:2cm) -- (30:2cm);
\end{scope}

\begin{scope}[shift={(300:1cm)}]
\draw [dashed] (210:2cm) -- (30:2cm);
\end{scope}

\begin{scope}[shift={(120:1cm)}]
\draw [dashed] (210:2cm) -- (30:2cm);
\end{scope}

\draw [dashed] (150:2cm) -- (-30:2cm);

\begin{scope}[shift={(60:1cm)}]
\draw [dashed] (150:2cm) -- (-30:2cm);
\end{scope}

\begin{scope}[shift={(240:1cm)}]
\draw [dashed] (150:2cm) -- (-30:2cm);
\end{scope}

\begin{scope}[rotate=300]
\begin{scope}[shift={(60:0.13cm)}]
\draw [fill = green, scale = 0.8] (0,0) -- (30:1.17cm) -- (0,1.17) -- (0,0);
\end{scope}

\begin{scope}[shift={(60:2cm)}]
\begin{scope}[shift={(60:-0.13cm)}]
\draw [fill = green, scale = 0.8] (0,0) -- (210:1.17cm) -- (0,-1.17) -- (0,0);
\end{scope}
\end{scope}
\end{scope}
\end{scope}
\end{tikzpicture}
\end{center}
It's not hard to check that these six galleries, each corresponding to a coroot $s_\beta \cdot \lambda$ for some $\beta \in \Phi$, are in fact LS galleries. Thus, by Theorem \ref{thm: LSgalleries}, we can conclude that each of the galleries presented here gives an irreducible component of $\overline{S_{s_\beta \cdot \lambda} \cap \left(\mathrm{Gr}_{\mathrm{PGL}_3}\right)_{\leq \lambda}}$. In fact we can deduce that for each $\beta$, $\overline{S_{s_\beta \cdot \lambda} \cap \left(\mathrm{Gr}_{\mathrm{PGL}_3}\right)_{\leq \lambda}}$ is irreducible. This is because there are no other LS galleries with weight $s_\beta \cdot \lambda$.

From the dual perspective given by geometric Satake, i.e., the representation theory of $\mathrm{SL}_3$, this is not surprising: we are considering the highest weight representation with weight $\lambda$, which is the adjoint representation on $\mathfrak{sl}_3$. For each root $s_\beta \cdot \lambda$, we have a one dimensional weight space. 

Let us now consider the galleries in $\Gamma(\gamma_\lambda)$ with weight 0; i.e., galleries that start and end at 0. There are six such galleries:

\begin{center}
\begin{tikzpicture}[scale=1]
\draw [dashed] (0,2) -- (0,0);
\draw [dashed] (0,0) -- (0,-2);
\draw [dashed] (1,2) -- (1,-2);
\draw [dashed] (-1,2) -- (-1,-2);

\draw [dashed] (210:2cm) -- (0,0);
\draw [dashed] (0,0) -- (30:2cm);

\begin{scope}[shift={(-60:1cm)}]
\draw [dashed] (210:2cm) -- (30:2cm);
\end{scope}

\begin{scope}[shift={(300:1cm)}]
\draw [dashed] (210:2cm) -- (30:2cm);
\end{scope}

\begin{scope}[shift={(120:1cm)}]
\draw [dashed] (210:2cm) -- (30:2cm);
\end{scope}

\draw [dashed] (150:2cm) -- (-30:2cm);

\begin{scope}[shift={(60:1cm)}]
\draw [dashed] (150:2cm) -- (-30:2cm);
\end{scope}

\begin{scope}[shift={(240:1cm)}]
\draw [dashed] (150:2cm) -- (-30:2cm);
\end{scope}

\foreach\ang in {60,120,...,360}{
\begin{scope}[rotate=\ang]
\draw [dashed, ultra thick, green] (90:1.17cm) -- (30:1.17cm);
\begin{scope}[shift={(60:0.13cm)}]
\draw [fill = green, scale = 0.8] (0,0) -- (30:1.17cm) -- (0,1.17) -- (0,0);
\end{scope}
\end{scope}
}

\node at (0.35,0.6) {1}; 
\node at (-0.35, 0.6) {2};
\node at (-0.6,0) {3};
\node at (-0.35, -0.6) {4};
\node at (0.35, -0.6) {5};
\node at (0.6, 0) {6};

\end{tikzpicture}
\end{center}

If we think about the zero weight space of the adjoint representation, or equivalently the irreducible components of $\overline{S_0 \cap \left(\mathrm{Gr}_{\mathrm{PGL}_3}\right)_{\leq \lambda}}$, we know that there should only be two LS galleries in $\Gamma(\gamma_\lambda)$ of weight 0. 

Indeed, we can quickly eliminate galleries 1, 2, and 6; these three galleries are not positively folded. We can also eliminate gallery 4 as it is not an LS gallery: let $d$ be the dimension of gallery 4, then we have
$$\mathrm{dim} \, \gamma_\lambda - d = 4-4 = 0$$
But $\mathrm{ht}(\lambda-0) = 2$.

\subsection{The Bott-Samelson Variety} \label{subsect: galleriesBSVariety}
In this section, we sketch Gaussent and Littelmann's proof of Theorem \ref{thm: LSgalleries} in \cite{Gaussent-Littelmann}. We alert the reader that this section is optional since it does not add combinatorial content to MV cycles, which is our main goal. In fact, in the previous section we have completely ignored the geometric origins of LS galleries, with the purpose of distilling their combinatorial structures; the disadvantage of this approach is that the purported bijection in Theorem \ref{thm: LSgalleries} is rather opaque. It is crucial in Gaussent and Littelmann's strategy to view LS galleries in the context of Bott-Samelson resolutions of $\mathrm{Gr}_{\leq \lambda}$, which in turn can be viewed as galleries in the affine Bruhat-Tits building $\mathscr{I}^{\mathrm{aff}}$. We assume for this section the theory of affine buildings, and outline the proof in which LS galleries naturally arise, in order to describe the bijection $Z$.

Let's first fix some notation. Let $I$ be the Iwahori subgroup corresponding to our chosen Borel, i.e., $I$ is the preimage of Borel under the speciaization map
\begin{center}
\begin{tikzpicture}
\node at (0,0) {$G(\mathcal{O}) \longrightarrow G(\CC)$};
\node at (0,-0.5) {$t \mapsto 0$};
\end{tikzpicture}
\end{center}

We also fix a choice of lifting of the simple reflections generating $W^{\mathrm{aff}}$, and for every simple reflection $s_i$, we denote by $\tilde{P_i}$ the parahoric subgroup generated by the Iwahori and $s_i$ inside $G(F)$.

Let $\gamma_\lambda$ be a based gallery ending at $\lambda$; this gives us a choice of a reduced expression $w_\lambda = s_{i_p} \cdots s_{i_1}$. 
The \textit{Bott-Samelson variety} associated to $\gamma_\lambda$ is the following smooth projective variety
$$\hat{\Sigma}(\gamma_\lambda) = G(\mathcal{O}) \times_{I} \tilde{P}_{i_1} \times_{I} \cdots \times_{I} \tilde{P}_{i_p}/I$$
Note that we have a $G(\mathcal{O})$-equivariant map 
$$\pi: \hat{\Sigma}(\gamma_\lambda) \longrightarrow \mathrm{Gr}_{\leq \lambda}$$
$$[g_0, g_1, \ldots, g_p] \mapsto g_0\cdots g_p \cdot t^{\lambda_{\mathrm{fund}}}$$

Recall that the affine Bruhat-Tits building $\mathscr{I}^{\mathrm{aff}}$ is a certain quotient of $G(F) \times \mathscr{A}^{\mathrm{aff}}$, with $\left(\mathscr{I}^{\mathrm{aff}}\right)^T = \mathscr{A}^{\mathrm{aff}}$ (see Definition 4 in \cite{Gaussent-Littelmann} for the precise description of this quotient). We can then view $\hat{\Sigma}(\gamma_\lambda)$ as galleries in $\mathscr{I}^{\mathrm{aff}}$ by sending an element $[g_0, \ldots, g_p]$ of the Bott-Samelson variety to the gallery
$$\{0\}, \, g_0 \cdot A_{\mathrm{fund}}, \, g_0g_1 \cdot A_{\mathrm{fund}}, \ldots, \{g_0\cdots g_p \lambda_{\mathrm{fund}}\}$$

As with any Bruhat-Tits building, we have retraction maps onto a given apartment; let $r: \mathscr{I}^{\mathrm{aff}} \to \mathscr{A}^{\mathrm{aff}}$ be the retraction onto $\mathscr{A}^{\mathrm{aff}} = \left(\mathscr{I}^{\mathrm{aff}}\right)^T$. This extends to a retraction map (Section 7 in \cite{Gaussent-Littelmann})
$$r: \hat{\Sigma}(\gamma_\lambda) \to \hat{\Sigma}(\gamma_\lambda)^T$$
Using the language developed here, Theorem \ref{thm: LSgalleries} can be rephrased more accurately as follows:
\begin{thm} [Gaussent, Littelmann]
The map 
\begin{center}
\begin{tikzpicture}
\node at (0,0) {$Z: \Gamma_{LS}(\gamma_\lambda) \to \bigcup_{\nu} \mathrm{Irr}(\overline{S_\nu \cap \mathrm{Gr}_{\leq \lambda}})$};
\node at (0,-0.7) {$\delta \mapsto \overline{\pi(r^{-1}(\delta))}$};
\end{tikzpicture}
\end{center}
is a bijection of sets. Furthermore, if $\mathrm{wt}(\delta) = \nu$, then $Z(\delta)$ is in $\mathrm{Irr}(\overline{S_\nu \cap \mathrm{Gr}_{\leq \lambda}})$.
\end{thm}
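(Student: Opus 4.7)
The plan is to reduce the theorem to understanding two structures on the Bott--Samelson variety $\hat{\Sigma}(\gamma_\lambda)$: its stratification by fibers of the retraction $r$, and the behavior of $\pi$ with respect to semi-infinite orbits. The first step is to describe the fibers $C(\delta) := r^{-1}(\delta)$ for $\delta \in \hat{\Sigma}(\gamma_\lambda)^T = \Gamma(\gamma_\lambda)$. Each parahoric quotient $\tilde P_{i_j}/I$ decomposes under the Iwahori into two orbits (a point and an affine line) corresponding at step $j$ to either keeping $\Delta_{j-1}$ fixed across the facet $G_j$ or crossing it. Iterating this over the reduced word for $w_\lambda$, one shows that $C(\delta)$ is nonempty exactly when $\delta$ is positively folded, and for each such $\delta$ is a locally closed affine bundle whose dimension equals the combinatorial quantity $\dim \delta = \sum_{j=0}^p |\Phi^{\mathrm{aff}}_+(j)|$.

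\textbf{Compatibility and dimension matching.} Next I would reconcile the building-theoretic retraction with the $U(F)$-orbit decomposition on the affine Grassmannian. The retraction $r$ based at the anti-dominant sector can be identified with the Bialynicki--Birula projection attached to a dominant regular cocharacter, whose fixed-point attracting sets on $\mathrm{Gr}$ are precisely the semi-infinite orbits $S_\nu$; from this one deduces $\pi(C(\delta)) \subseteq S_{\mathrm{wt}(\delta)} \cap \mathrm{Gr}_{\leq \lambda}$. Because $\pi$ is a proper birational resolution of $\mathrm{Gr}_{\leq \lambda}$, and by Mirkovi\'c--Vilonen the variety $\overline{S_\nu \cap \mathrm{Gr}_{\leq \lambda}}$ is equidimensional of dimension $\rho(\lambda+\nu)$, a short codimension count shows the codimension of $\pi^{-1}\!\left(\overline{S_\nu \cap \mathrm{Gr}_{\leq \lambda}}\right)$ inside $\hat{\Sigma}(\gamma_\lambda)$ is exactly $\mathrm{ht}(\lambda-\nu)$, which is precisely $\dim \gamma_\lambda - \dim \delta$ as in the LS definition. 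Hence the LS galleries of weight $\nu$ index precisely the top-dimensional strata $\overline{C(\delta)}$ of the preimage, and on each such stratum $\pi$ is generically injective, giving a birational map onto an irreducible component of $\overline{S_\nu \cap \mathrm{Gr}_{\leq \lambda}}$.

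\textbf{Bijectivity and main obstacle.} Injectivity of $Z$ follows because distinct cells $C(\delta)$ are disjoint and their closures birationally parametrize distinct irreducible components downstairs. Surjectivity follows from the properness of $\pi$: the preimage of any irreducible component of $\overline{S_\nu \cap \mathrm{Gr}_{\leq \lambda}}$ is a union of the closed strata $\overline{C(\delta)}$, and it must contain at least one top-dimensional stratum (by the pure-dimensionality of both sides), which is forced by the codimension computation to correspond to an LS gallery. The main obstacle in this program is the compatibility assertion in the second step: verifying that the building retraction $r$ really does intertwine with the $U(F)$-orbit projection on $\mathrm{Gr}$. This requires a careful sector-at-infinity argument in $\mathscr{I}^{\mathrm{aff}}$ and is the genuine geometric bridge between the combinatorics of positively folded galleries and the representation-theoretically meaningful MV cycles; once this compatibility is established, the remainder of the proof is essentially a dimension and cell count matching the LS condition to irreducibility.
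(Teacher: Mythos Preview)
The paper does not actually prove this theorem. Section~4.3 only introduces the objects needed to make the map $Z$ precise---the Bott--Samelson variety $\hat\Sigma(\gamma_\lambda)$, the map $\pi$ to $\mathrm{Gr}_{\leq\lambda}$, the gallery interpretation of points of $\hat\Sigma(\gamma_\lambda)$, and the retraction $r$---and then restates Theorem~\ref{thm: LSgalleries} in this sharper form, deferring the argument entirely to \cite{Gaussent-Littelmann}. So there is no ``paper's own proof'' to compare against.

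That said, your proposal is a faithful outline of how Gaussent and Littelmann actually argue. The three pillars you identify are the right ones: (i) the cell decomposition $\hat\Sigma(\gamma_\lambda) = \bigsqcup_\delta C(\delta)$ with $C(\delta)$ nonempty precisely for positively folded $\delta$ and $\dim C(\delta) = \dim\delta$; (ii) the identification of the retraction based at the anti-dominant sector with the Bialynicki--Birula/semi-infinite orbit projection, yielding $\pi(C(\delta)) \subseteq S_{\mathrm{wt}(\delta)}$; and (iii) the dimension count matching the LS condition $\dim\gamma_\lambda - \dim\delta = \mathrm{ht}(\lambda-\nu)$ to top-dimensionality of $\overline{C(\delta)}$ inside $\pi^{-1}\bigl(\overline{S_\nu \cap \mathrm{Gr}_{\leq\lambda}}\bigr)$. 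You are also right that (ii) is where the real geometric work lies.

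One place to tighten: your injectivity argument assumes $\pi$ is generically injective on each top-dimensional $C(\delta)$, so that distinct LS cells hit distinct components downstairs. This is true but not automatic from $\pi$ being a birational resolution of $\mathrm{Gr}_{\leq\lambda}$, since for $\nu\neq\lambda$ the cells $C(\delta)$ sit over the boundary where $\pi$ may have positive-dimensional fibers. Gaussent--Littelmann close this either by analyzing $\pi|_{C(\delta)}$ directly, or more economically by the numerical coincidence: the number of LS galleries of weight $\nu$ equals $\dim V_\lambda(\nu)$ via Littelmann's path model, which equals $\#\mathrm{Irr}\bigl(\overline{S_\nu\cap\mathrm{Gr}_{\leq\lambda}}\bigr)$ by Mirkovi\'c--Vilonen, so surjectivity of $Z$ forces bijectivity. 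Either route completes your sketch.
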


\section{Crystals}
So far, we've introduced several structures associated to MV cycles on $\gr$: weight spaces of $\hat{G}$ representations, MV polytopes, modules over $\Pi Q$, and LS galleries. These objects in fact inherit crystal structures as defined by Kashiwara, and are isomorphic as crystals via the various correspondences we've discussed. In this section, we review the definition of crystals following \cite{Kashiwara}, define the crystal structures on MV polytopes and LS galleries, and explain several isomorphisms of crystals. Readers who are experienced with the basic theory of crystals may wish to proceed to Section \ref{subsect: crystalMV} directly.  

\subsection{The Crystal $B(-\infty)$} \label{subsect: infty)}
The formal definition of crystals is quite complicated, but is very concretely modeled after the theory of highest weight representations of semisimple Lie algebras. Thus, we first give some motivation before defining an abstract crystal.

Let us fix notations for this section. Let $P$ be a free $\ZZ$-module (the ``weight lattice"), $I$ an index set, a set of ``simple roots" $\alpha_i \in P$ for each $i \in I$, a set of ``simple coroots" $h_i \in P^*$ for each $i \in I$, and $(\cdot, \cdot): P \times P \to \QQ$ a bilinear symmetric form. We also write $\langle \cdot, \cdot \rangle$ for the canonical pairing $P \times P^* \to \QQ$. We assume that the above datum satisfy the following conditions:
$$(\alpha_i, \alpha_i) \in 2\NN \text{ for each } i.$$ 
$$\langle h_i, \lambda \rangle = \frac{2(\alpha_i, \lambda)}{(\alpha_i, \alpha_i)} \text{ for each } i \in I \text{ and } \lambda \in P.$$
$$(\alpha_i, \alpha_j) \leq 0 \text{ for } i, j \in I \text{ with } i \neq j.$$

Let $\g$ be a semisimple Lie algebra, and $\mathfrak{h}$ a Cartan subalgebra. For now we can pretend that $P$ is a genuine weight lattice, the $\alpha_i$'s are simple roots, and the $h_i$'s are simple coroots in a Lie-theoretic context. Suppose we are given some finite dimensional irreducible highest weight representation $V$; we can then decompose $V$ as follows: 
$$V = \bigoplus_{\lambda \in \mathfrak{h}^*} V_\lambda, \text{ where } V_\lambda = \{v \in V: h\cdot v = \lambda(h)v \text{ for all } h \in \mathfrak{h}\}$$ 
Let $B \subseteq \mathfrak{h}^*$ be the weights of $V$, i.e., the finite subset consisting of those $\lambda$ for which $V_\lambda$ is nonzero. Choosing Chevalley generators $e_i, f_i, h_i$, we can check by standard calculations that for every $\lambda \in B$, 
$$e_i \cdot V_\lambda  \subseteq V_{\lambda+\alpha_i} \\ \text{  and  } \\ f_i \cdot V_{\mu} \subseteq V_{\mu - \alpha_i}$$
Fixing an $i$, we can then visualize the weight spaces as a ``string":

\begin{center}
\begin{tikzpicture}
\node at (0,0) (a) {$\bullet$};
\node at (90:0.5cm) {$\lambda$};
\node at (1.5,0) (b) {$\bullet$};
\node at (1.5,0.5) {$\lambda+\alpha_i$};
\node at (-1.5,0) (c) {$\bullet$};
\node at (-1.5,0.5) {$\lambda-\alpha_i$};
\draw [thick] (a)--(b);
\draw [thick] (a)--(c);
\draw [thick] (b) -- (3,0);
\draw [thick] (c) -- (-3,0);
\node at (3.5,0) {$\cdots$};
\node at (-3.5,0) {$\cdots$};
\end{tikzpicture}
\end{center}
where applications of $e_i$ move you to the right, and applications of $f_i$ move you to the left. Since $V$ is assumed finite dimensional (or alternatively by Serre's relations), for each weight $\lambda$ we can define $\varepsilon_i(\lambda)$ and $\varphi_i(\lambda)$ as the largest natural numbers such that
$$(e_i)^{\varepsilon_i(\lambda)} \cdot V_\lambda \neq 0 \\ \text{  and  } \\ (f_i)^{\varphi_i(\lambda)} \cdot V_\lambda \neq 0$$
The definition of a crystal formalizes the intuitive notions discussed so far.

\begin{defn} A crystal is a set $B$ along with the following datum:
\begin{enumerate}
\item $\mathrm{wt}: B \to P$
\item $\varepsilon_i: B \to \ZZ \sqcup \{-\infty\}$
\item $\varphi_i: B \to \ZZ \sqcup \{-\infty\}$
\item $\tilde{e}_i: B \to B \sqcup \{0\}$
\item $\tilde{f}_i: B \to B \sqcup \{0\}$
\end{enumerate}
satisfying the following axioms:
\begin{enumerate}
\item $\varphi_i(b) = \varepsilon_i(b) + \langle h_i, \mathrm{wt}(b) \rangle \text{  for each  } i$
\item If $b \in B$ satisfies $\tilde{e}_ib \neq 0$, then:
$$\varepsilon_i(\tilde{e}_ib) = \varepsilon_i(b)-1, \quad \varphi_i(\tilde{e}_ib) = \varphi_i(b)+1, \quad \mathrm{wt}(\tilde{e}_ib) = \mathrm{wt}(b) + \alpha_i.$$
\item If $b \in B$ satisfies $\tilde{f}_ib \neq 0$, then:
$$\varepsilon_i(\tilde{f}_ib) = \varepsilon_i(b)+1, \quad \varphi_i(\tilde{f}_ib) = \varphi_i(b)-1, \quad \mathrm{wt}(\tilde{f}_ib) = \mathrm{wt}(b) - \alpha_i.$$
\item For $b_1, b_2 \in B$, $b_2 = \tilde{f}_ib_1$ if and only if $b_1 = \tilde{e}_ib_2$.
\item If $\varphi_i(b) = -\infty$, then $\tilde{e}_ib = \tilde{f}_i(b) = 0$.
\end{enumerate}
\end{defn}

Let $B_1, B_2$ be two crystals. A morphism $\psi: B_1 \to B_2$ is a set-theoretic map $B_1 \sqcup \{0\} \to B_2 \sqcup \{0\}$ satisfying the following conditions:
\begin{enumerate}
\item $\psi(0) = 0$.
\item For all $b \in B_1$, we have
$$\mathrm{wt}(\psi(b)) = \mathrm{wt}(b), \quad \varepsilon_i(\psi(b)) = \varepsilon_i(b), \quad \varphi_i(\psi(b)) = \varphi_i(b).$$
\item Let $b, b' \in B_1$ be nonzero and $i \in I$ be such that $\tilde{f}_i(b) = b'$. If $\psi(b), \psi(b')$ are nonzero, then
$$\tilde{f}_i(\psi(b)) = \psi(b').$$
\end{enumerate}
If $\psi$ commutes with all actions of $\tilde{e}_i, \tilde{f}_i$, then we say that $\psi$ is \textit{strict}. If $\psi$ is strict and bijective on the underlying set, we say that $\psi$ is an isomorphism. \\

Let's first give three easy examples of crystals.

\noindent
\textbf{Example.}
Let's start with the most trivial example. Let $\lambda$ be an arbitrary weight, and let $T(\lambda) = \{t_\lambda\}$ be the crystal defined by
$$\mathrm{wt}(t_\lambda) = \lambda, \quad \tilde{e}_i \cdot t_\lambda = \tilde{f}_i \cdot t_\lambda = 0$$
$$\varphi(t_\lambda) = \varepsilon(t_\lambda) = -\infty$$
It is trivial to see that this defines a crystal. 

For a second example, let $i$ be an element of the index set $I$. Define the crystal $B_i = \{b_i(n): n \in \ZZ\}$ as follows:
$$\mathrm{wt}(b_i(n)) = n, \quad \tilde{e}_i \cdot b_i(n) = b_i(n+1), \quad \tilde{f}_i b_i(n) = b_i(n-1)$$
$$\tilde{e}_j \cdot b_i(n) = \tilde{f}_j \cdot b_i(n) = 0, \text{ for } i \neq j$$
$$\varphi_i(b_i(n)) = n, \quad \varepsilon_i(b_i(n)) = -n; \quad \varphi_j(b_i(n)) = \varepsilon_j(b_i(n)) = -\infty$$
In other words, we single out the $i$th ``$\mathfrak{sl}_2$-string".

Another natural example comes from the motivating example we sketched before the definition of a crystal. Let $\lambda$ be a dominant integral weight for some semisimple Lie algebra $\g$. Let $B(\lambda)$ be the set of weights in the finite dimensional simple module with highest weight $\lambda$. The root operators $\tilde{e}_i, \tilde{f}_i$ act on the weights as they do in the representation, and $\varphi_i, \varepsilon_i$ record the largest integer such that a particular weight is not annihilated by the respective root operators. Note that every element in the crystal is generated by the highest weight element $b_\lambda$. \\

We now introduce the crystal $B(-\infty)$ associated to \textit{crystal bases} for the quantum group $U_q(\mathfrak{n}^+)$ as defined by Kashiwara and Saito in \cite{Kashiwara-Saito}.\\

\noindent
\textbf{Example.} The crystal $B(-\infty)$ is a \textit{lowest weight crystal}. This means that it has a ``lowest weight element", denoted $u_\infty$, with weight $0$, such that 
\begin{enumerate}
\item $\tilde{f}_i \cdot u_\infty = 0$ for all $i$.
\item All other elements can be obtained from $u_\infty$ by applying $\tilde{e}_i$'s.
\end{enumerate}
Instead of defining $B(-\infty)$, we describe the essential properties that uniquely characterize it (this is given by Proposition 3.2.3. in \cite{Kashiwara-Saito}):
\begin{enumerate}
\item The map $\Psi_i: B(-\infty) \to B(-\infty) \otimes B_i$ given by $u_\infty \mapsto u_\infty \otimes b_i(0)$ is a strict embedding, with image contained in $B \times \{b_i(n): n \in \ZZ_{\geq 0}\}$.
\item For any $b \neq u_\infty$ in $B$, there is an $i$ such that $\Psi_i(b) = b' \otimes \tilde{e}_i^nb_i(0)$ for some $n > 0$.
\end{enumerate}
Analogously, one can define a \textit{highest weight crystal} $B(\infty)$ by dualizing all the statements above (switch $\tilde{f}_i$'s for $\tilde{e}_i$'s, and vice versa). $B(\infty)$ is the universal highest weight crystal in the following sense: let $\lambda$ be a dominant integral weight. Then there is an embedding
$$B(\lambda) \longrightarrow B(\infty) \otimes T(\lambda)$$
$$b_\lambda \mapsto u_\infty \otimes t_\lambda$$

\subsection{Several Crystals} \label{subsect: crystalMV}
Now that we've defined and seen some examples of crystals, we can finally state one of the central results of this note. The result is due to various combinations of the authors referenced so far: Anderson, Baumann, Gaussent, Kamnitzer, Kashiwara, Lusztig, Saito and many others. 

\begin{thm} \label{thm: main}
Let $G$ be a complex reductive algebraic group, and $Q$ a quiver whose underlying graph is the Dynkin diagram associated to $\hat{G}$, the Langlands dual of $G$. Then the set of MV cycles on $\gr$, the set of MV polytopes associated to $\gr$, the set of irreducible components in Lusztig's nilpotent variety $\mathrm{Irr}(\mathfrak{B})$, and the set of LS galleries all admit crystal structures, and are all isomorphic as crystals. In fact, they are isomorphic to the crystal $B(-\infty)$ of crystal bases of $\hat{\g} = \mathrm{Lie}(\hat{G})$. 
\end{thm}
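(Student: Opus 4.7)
The plan is to prove the theorem in a ``hub and spoke'' fashion: rather than construct four separate bijections pairwise, I would identify each of the four crystals with the universal lowest weight crystal $B(-\infty)$ using the Kashiwara--Saito characterization stated at the end of Section \ref{subsect: infty)}. The main lemma I would rely on is that any lowest weight crystal with a lowest weight vector of weight $0$ satisfying the two embedding properties of $\Psi_i$ must be isomorphic to $B(-\infty)$; once crystal structures are defined on each of the four sets in question, checking this universal property reduces to an essentially combinatorial verification.

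First I would fix the crystal structure on each of the four sets. For $\mathrm{Irr}(\mathfrak{B})$, I would quote the construction of Kashiwara--Saito, in which the operators $\tilde{e}_i, \tilde{f}_i$ are defined geometrically by varying the rank of specific structure maps in the nilpotent variety $\Lambda(\nu)$; the isomorphism $\mathrm{Irr}(\mathfrak{B}) \cong B(-\infty)$ is then the main theorem of \cite{Kashiwara-Saito}, which I would cite rather than reprove. For MV polytopes, the crystal operators are defined combinatorially by modifying specific edge lengths of the polytope along the $i$-th direction (Baumann--Kamnitzer); verifying the crystal axioms is a direct check in the pseudo-Weyl polytope data. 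For MV cycles, I would use the Braverman--Finkerberg--Gaitsgory operators obtained from the geometry of $S_\lambda \cap \mathrm{Gr}_{\leq \mu}$ under the hyperbolic localization functor. For LS galleries, the operators $\tilde{e}_i, \tilde{f}_i$ are the folding and unfolding operators of Gaussent--Littelmann, which act on a gallery by folding or unfolding across a specific affine hyperplane determined by $i$.

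Next, for each crystal, I would verify the two Kashiwara--Saito properties: (i) the map $\Psi_i$ sending the lowest weight vector to $u_\infty \otimes b_i(0)$ is a strict embedding with image in $B \times \{b_i(n) : n \geq 0\}$; and (ii) every non-lowest element admits some $i$ for which $\Psi_i(b) = b' \otimes \tilde{e}_i^n b_i(0)$ with $n > 0$. For MV polytopes this is an explicit computation in the BZ data; for $\mathrm{Irr}(\mathfrak{B})$ it follows from Lusztig's stratification by jump data; for LS galleries it is the content of the ``root operator commutation'' analysis in \cite{Baumann-Gaussent}; for MV cycles the result is essentially established by Braverman--Finkelberg--Gaitsgory via a degeneration argument. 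By the uniqueness of $B(-\infty)$, all four crystals are canonically isomorphic.

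The hard part, and the place where the proposal hides most of its content, is verifying that the crystal operators are \emph{compatible} with the geometric bijections which were already constructed in the paper --- namely the moment map (MV cycles to MV polytopes), Theorem \ref{thm: quiverToPolytope} (components of $\Lambda(\nu)$ to polytopes), and Theorem \ref{thm: LSgalleries} (LS galleries to components of MV cycles). In principle the Kashiwara--Saito argument above yields \emph{some} crystal isomorphism through $B(-\infty)$, but one must check these coincide with the bijections already given. The matching of polytope-crystal with quiver-crystal is the Baumann--Kamnitzer compatibility in \cite{Baumann-Kamnitzer}; the matching with the geometric MV-cycle crystal is established by Baumann--Gaussent; and the matching with LS galleries is the central technical content of the later sections of \cite{Baumann-Gaussent}. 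I expect the most involved step to be the last one, since the combinatorics of folding operators on galleries is quite different in flavor from the polytope operators, and the proof in \cite{Baumann-Gaussent} goes through the intermediate step of identifying both with a ``path'' model à la Littelmann before invoking the characterization of $B(-\infty)$.
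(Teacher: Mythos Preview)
Your proposal is a reasonable outline, but you should know that the paper does not actually prove this theorem. Immediately after stating Theorem~\ref{thm: main}, the paper attributes it to ``various combinations of the authors referenced so far: Anderson, Baumann, Gaussent, Kamnitzer, Kashiwara, Lusztig, Saito and many others,'' notes that $B(-\infty)$ has no nontrivial automorphisms (citing \cite{Kashiwara-Saito}) so the isomorphisms are unique, and then refers the reader to \cite{Braverman-Finkelberg-Gaitsgory}, \cite{Baumann-Kamnitzer-Tingley}, and \cite{Baumann-Gaussent} for the crystal structures on MV cycles, polytopes, and $\Pi Q$-modules respectively. The only thing the paper actually does is write out the crystal operators $\tilde{e}_i, \tilde{f}_i$ on LS galleries explicitly and illustrate them in a type $A_2$ example.

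Your hub-and-spoke strategy via the Kashiwara--Saito characterization is exactly the mechanism underlying the cited literature, and the references you invoke line up with the ones the paper cites. In that sense your proposal is not a different route so much as an expansion of what the paper leaves implicit. The one place where you go further than the paper is your third paragraph on compatibility of the abstract crystal isomorphisms with the concrete geometric bijections (moment map, $\mathrm{Pol}$, and $Z$); the paper does not address this at all, relying instead on the rigidity of $B(-\infty)$ to conclude uniqueness of the isomorphism without checking it matches the previously constructed maps. If you want to match the paper's level, you can simply cite the same sources and describe the gallery operators; if you want a genuine proof, your outline is sound but each ``verification of the Kashiwara--Saito properties'' is itself a substantial theorem in the cited papers, not a routine check.
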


It is worth noting that the crystal $B(-\infty)$ has no nontrivial automorphisms \cite{Kashiwara-Saito}, so these isomorphisms are unique. Unfortunately, the presentation for crystal operations is computationally explicit only for LS galleries; from the perspective of combinatorializing MV cycles, this is rather inconvenient, so we only give the construction for LS galleries here. For example, the crystal operations on an MV polytope are computed by defining it on certain faces, then using structure theorems (``tropical Pl\"ucker relations") of MV polytopes to reconstruct the rest of the polytope (see Section 5 of \cite{Baumann-Kamnitzer-Tingley}). For the crystal structures on MV cycles and $\Pi Q$-modules, we refer the reader to Section 13 of \cite{Braverman-Finkelberg-Gaitsgory} and Section 6 of \cite{Baumann-Kamnitzer-Tingley} respectively. \\



Let's describe the crystal structure on the set of LS galleries $\mathcal{LS}$. Let $\mu$ be a coweight, and let
$$\delta = \{0\} = G_0 \subset \overline{\Delta_0} \supset G_1 \subset \overline{\Delta_1} \supset G_2 \subset \cdots \supset G_p \subset \overline{\Delta_p} \supset G_{p+1} = \{\mu\}$$
be an arbitrary gallery in $\Gamma(\gamma_\lambda)$ for some based gallery $\gamma_\lambda$. Define the crystal operations
$$\mathrm{wt}: \Gamma(\gamma_\lambda) \to P \quad \varepsilon_i, \varphi_i: \Gamma(\gamma_\lambda) \to \ZZ \sqcup \{-\infty\} \quad \tilde{e}_i, \tilde{f}_i: \Gamma(\gamma_\lambda) \to \Gamma(\gamma_\lambda)$$
as follows: fix a simple root $\alpha = \alpha_i$, and let $m \in \ZZ$ be the smallest integer such that the hyperplane $H_{\alpha,m}$ contains a face $G_j$, where $0 \leq j \leq p+1$.

\begin{enumerate}
\item $\mathrm{wt}(\delta) = \mu$ as before.
\item $\varepsilon_i(\delta) = -m, \, \varphi_i(\delta) = \langle \alpha, \mu\rangle -m$.
\item If $m = 0$, $\tilde{e}_i \delta$ is undefined. Otherwise, find $k \in \{1, \ldots, p+1\}$ minimal such that $G_k \subset H_{\alpha, m}$, and find $j \in \{0, \ldots, k-1\}$ maximal such that $G_j \subset H_{\alpha, m+1}$. Then we define $\tilde{e}_i \delta$ as
$$\{0\} = G_0 \subset \overline{\Delta_0} \supset G_1 \cdots \supset G_j \subset s_{\alpha,m+1}(\overline{\Delta_j}) \supset s_{\alpha,m+1}(G_{j+1}) \subset \cdots \supset s_{\alpha,m+1}(G_{k-1})$$ 
$$\subset s_{\alpha, m+1}(\overline{\Delta_{k-1}}) \supset \tau_{\alpha^\vee}(G_k) \subset \tau_{\alpha^\vee}(\overline{\Delta_k}) \supset \cdots \subset \tau_{\alpha^\vee}(\overline{\Delta_p}) \supset \tau_{\alpha^\vee}(G_{p+1}) = \{\mu+\alpha^\vee\}$$
In other words, we reflect all faces between $G_j$ and $G_k$ by $H_{\alpha,m+1}$, and translate all faces after $G_k$ by $\alpha^\vee$.
\item If $m = \langle \alpha, \mu\rangle$, then $\tilde{f}_i\delta$ is not defined. Otherwise find $j \in \{0, \ldots, p\}$ maximal such that $G_j \subset H_{\alpha, m}$, and find $k \in \{j+1, \ldots, p+1\}$ minimal such that $G_k \subset H_{\alpha, m+1}$. Then we define $\tilde{f}_i \delta$ as
$$\{0\} = G_0 \subset \overline{\Delta_0} \supset G_1 \cdots \supset G_j \subset s_{\alpha,m}(\overline{\Delta_j}) \supset s_{\alpha,m}(G_{j+1}) \subset \cdots \supset s_{\alpha,m}(G_{k-1})$$ 
$$\subset s_{\alpha, m}(\overline{\Delta_{k-1}}) \supset \tau_{-\alpha^\vee}(G_k) \subset \tau_{-\alpha^\vee}(\overline{\Delta_k}) \supset \cdots \subset \tau_{-\alpha^\vee}(\overline{\Delta_p}) \supset \tau_{-\alpha^\vee}(G_{p+1}) = \{\mu-\alpha^\vee\}$$
In other words, we reflect all faces between $G_j$ and $G_k$ by $H_{\alpha,m}$, and translate all faces after $G_k$ by $-\alpha^\vee$.
\end{enumerate}

\noindent
\textbf{Example.} These definitions look terrible; let's look at what it's doing on very explicit examples. The following LS galleries of type $\lambda = (1,0,-1)$ in type $A_2$ illustrate a simple case of applying operations in the two directions available $\alpha_1, \alpha_2$:

\begin{center}
\begin{tikzpicture}[scale=0.7]
\draw [dashed] (0,2) -- (0,0);
\draw [dashed] (0,0) -- (0,-2);
\draw [dashed] (1,2) -- (1,-2);
\draw [dashed] (-1,2) -- (-1,-2);

\draw [dashed] (210:2cm) -- (0,0);
\draw [dashed] (0,0) -- (30:2cm);

\begin{scope}[shift={(-60:1cm)}]
\draw [dashed] (210:2cm) -- (30:2cm);
\end{scope}

\begin{scope}[shift={(300:1cm)}]
\draw [dashed] (210:2cm) -- (30:2cm);
\end{scope}

\begin{scope}[shift={(120:1cm)}]
\draw [dashed] (210:2cm) -- (30:2cm);
\end{scope}

\draw [dashed] (150:2cm) -- (-30:2cm);

\begin{scope}[shift={(60:1cm)}]
\draw [dashed] (150:2cm) -- (-30:2cm);
\end{scope}

\begin{scope}[shift={(240:1cm)}]
\draw [dashed] (150:2cm) -- (-30:2cm);
\end{scope}

\begin{scope}[rotate=60]
\begin{scope}[shift={(60:0.13cm)}]
\draw [fill = green, scale = 0.8] (0,0) -- (30:1.17cm) -- (0,1.17) -- (0,0);
\end{scope}

\begin{scope}[shift={(60:2cm)}]
\begin{scope}[shift={(60:-0.13cm)}]
\draw [fill = green, scale = 0.8] (0,0) -- (210:1.17cm) -- (0,-1.17) -- (0,0);
\end{scope}
\end{scope}
\end{scope}

\begin{scope}[shift={(0:8cm)}, rotate = -60]
\draw [dashed] (0,2) -- (0,0);
\draw [dashed] (0,0) -- (0,-2);
\draw [dashed] (1,2) -- (1,-2);
\draw [dashed] (-1,2) -- (-1,-2);

\draw [dashed] (210:2cm) -- (0,0);
\draw [dashed] (0,0) -- (30:2cm);

\begin{scope}[shift={(-60:1cm)}]
\draw [dashed] (210:2cm) -- (30:2cm);
\end{scope}

\begin{scope}[shift={(300:1cm)}]
\draw [dashed] (210:2cm) -- (30:2cm);
\end{scope}

\begin{scope}[shift={(120:1cm)}]
\draw [dashed] (210:2cm) -- (30:2cm);
\end{scope}

\draw [dashed] (150:2cm) -- (-30:2cm);

\begin{scope}[shift={(60:1cm)}]
\draw [dashed] (150:2cm) -- (-30:2cm);
\end{scope}

\begin{scope}[shift={(240:1cm)}]
\draw [dashed] (150:2cm) -- (-30:2cm);
\end{scope}

\begin{scope}[rotate=60]
\begin{scope}[shift={(60:0.13cm)}]
\draw [fill = green, scale = 0.8] (0,0) -- (30:1.17cm) -- (0,1.17) -- (0,0);
\end{scope}

\begin{scope}[shift={(60:2cm)}]
\begin{scope}[shift={(60:-0.13cm)}]
\draw [fill = green, scale = 0.8] (0,0) -- (210:1.17cm) -- (0,-1.17) -- (0,0);
\end{scope}
\end{scope}
\end{scope}
\end{scope}

\begin{scope}[shift={(0:3cm)}]
\draw [->] (0,0.5) -- (2,0.5);
\node at (1,1) {$\tilde{e}_1$};
\draw [->] (2,-0.5) -- (0,-0.5);
\node at (1,-1) {$\tilde{f}_1$};
\end{scope}

\begin{scope}[shift={(8,-8)}, rotate=-120]
\draw [dashed] (0,2) -- (0,0);
\draw [dashed] (0,0) -- (0,-2);
\draw [dashed] (1,2) -- (1,-2);
\draw [dashed] (-1,2) -- (-1,-2);

\draw [dashed] (210:2cm) -- (0,0);
\draw [dashed] (0,0) -- (30:2cm);

\begin{scope}[shift={(-60:1cm)}]
\draw [dashed] (210:2cm) -- (30:2cm);
\end{scope}

\begin{scope}[shift={(300:1cm)}]
\draw [dashed] (210:2cm) -- (30:2cm);
\end{scope}

\begin{scope}[shift={(120:1cm)}]
\draw [dashed] (210:2cm) -- (30:2cm);
\end{scope}

\draw [dashed] (150:2cm) -- (-30:2cm);

\begin{scope}[shift={(60:1cm)}]
\draw [dashed] (150:2cm) -- (-30:2cm);
\end{scope}

\begin{scope}[shift={(240:1cm)}]
\draw [dashed] (150:2cm) -- (-30:2cm);
\end{scope}

\begin{scope}[rotate=60]
\begin{scope}[shift={(60:0.13cm)}]
\draw [fill = green, scale = 0.8] (0,0) -- (30:1.17cm) -- (0,1.17) -- (0,0);
\end{scope}

\begin{scope}[shift={(60:2cm)}]
\begin{scope}[shift={(60:-0.13cm)}]
\draw [fill = green, scale = 0.8] (0,0) -- (210:1.17cm) -- (0,-1.17) -- (0,0);
\end{scope}
\end{scope}
\end{scope}
\end{scope}

\draw [->] (8.5,-3) -- (8.5,-5);
\draw [<-] (7.5,-3) -- (7.5,-5);
\node at (9,-4) {$\tilde{f}_2$};
\node at (7,-4) {$\tilde{e}_2$};

\begin{scope}[shift={(0,-8)}, rotate=180]
\draw [dashed] (0,2) -- (0,0);
\draw [dashed] (0,0) -- (0,-2);
\draw [dashed] (1,2) -- (1,-2);
\draw [dashed] (-1,2) -- (-1,-2);

\draw [dashed] (210:2cm) -- (0,0);
\draw [dashed] (0,0) -- (30:2cm);

\begin{scope}[shift={(-60:1cm)}]
\draw [dashed] (210:2cm) -- (30:2cm);
\end{scope}

\begin{scope}[shift={(300:1cm)}]
\draw [dashed] (210:2cm) -- (30:2cm);
\end{scope}

\begin{scope}[shift={(120:1cm)}]
\draw [dashed] (210:2cm) -- (30:2cm);
\end{scope}

\draw [dashed] (150:2cm) -- (-30:2cm);

\begin{scope}[shift={(60:1cm)}]
\draw [dashed] (150:2cm) -- (-30:2cm);
\end{scope}

\begin{scope}[shift={(240:1cm)}]
\draw [dashed] (150:2cm) -- (-30:2cm);
\end{scope}

\begin{scope}[rotate=60]
\begin{scope}[shift={(60:0.13cm)}]
\draw [fill = green, scale = 0.8] (0,0) -- (30:1.17cm) -- (0,1.17) -- (0,0);
\draw [dashed, ultra thick, green] (90:1cm) -- (30:1cm);
\end{scope}
\end{scope}

\end{scope}

\draw [->] (0.5,-3) -- (0.5,-5);
\draw [<-] (-0.5,-3) -- (-0.5,-5);
\node at (1,-4) {$\tilde{f}_2$};
\node at (-1,-4) {$\tilde{e}_2$};

\begin{scope}[shift={(3,-8)}]
\draw [->] (0,0.5) -- (2,0.5);
\node at (1,1) {$\tilde{e}_1$};
\draw [->] (2,-0.5) -- (0,-0.5);
\node at (1,-1) {$\tilde{f}_1$};
\end{scope}

\end{tikzpicture}
\end{center}

In particular, notice that the crystal operations send LS galleries to LS galleries; this is true in general and proven in \cite{Baumann-Gaussent}.

\subsection*{Acknowledgements} There are many people's help without which this project would not be possible. First of all, thanks to Peter May for his consistent effort in organizing the 2017 University of Chicago REU. I would also like to thank Jingren Chi for his tireless mentorship throughout the 8 weeks, and for introducing me to various excellent sources on geoemtric representation theory. Lastly, I would like to thank Joel Kamnitzer for explaining to me many of his extraordinary results. 


\bibliographystyle{abbrv}
\bibliography{MV}

\begin{thebibliography}{10}

\bibitem{Anderson}
J.~Anderson.
\newblock A polytope combinatorics for semisimple groups.
\newblock {\em ScholarWorks@UMass Amherst}, 2001.

\bibitem{Baumann-Gaussent}
P.~Baumann and S.~Gaussent.
\newblock On mirkovi\'c-vilonen cycles and crystal combinatorics.
\newblock {\em Representation Theory}, 12, 2008.

\bibitem{Baumann-Kamnitzer}
P.~Baumann and J.~Kamnitzer.
\newblock Preprojective algebras and mv polytopes.
\newblock {\em Representation Theory}, 16, 2012.

\bibitem{Baumann-Kamnitzer-Tingley}
P.~Baumann, J.~Kamnitzer, and P.~Tingley.
\newblock Affine mirkovic-vilenon polytopes.
\newblock {\em Publications math{\'e}matiques de l'IH{\'E}S}, 120:113--205,
  2014.

\bibitem{Beauville-Laszlo}
A.~Beauville and Y.~Laszlo.
\newblock Un lemme de descente.
\newblock {\em C.R. Acad. Sci. Paris S\'er. I Math.}, 320:335--340, 1995.

\bibitem{Braverman-Finkelberg-Gaitsgory}
A.~Braverman, M.~Finkelberg, and D.~Gaitsgory.
\newblock Uhlenbeck spaces via affine lie algebras.
\newblock {\em The unity of mathematics}, 244, 2006.

\bibitem{Brown}
K.~Brown.
\newblock {\em Buildings}.
\newblock Springer, 1989.

\bibitem{Derksen-Weyman-Zelevinsky}
H.~Derksen, J.~Weyman, and A.~Zelevinsky.
\newblock Quivers with potentials and their representations i: Mutations.
\newblock {\em Selecta Mathematical}, 14, 2008.

\bibitem{Gaussent-Littelmann}
S.~Gaussent and P.~Littelmann.
\newblock Ls galleries, the path model, and mv cycles.
\newblock {\em Duke Math. Journal}, 2005.

\bibitem{Geiss-Leclerc-Schroer}
C.~Geiss, B.~Leclerc, and J.~Schr\"oer.
\newblock Preprojective algebras and cluster algebras.
\newblock {\em Trends in representation theory of algebras and related topics},
  2008.

\bibitem{Hotta-Takeuchi-Tanisaki}
R.~Hotta, K.~Takeuchi, and T.~Tanisaki.
\newblock {\em D-modules, perverse sheaves, and representation theory}.
\newblock Birkhauser Basel, 2008.

\bibitem{Kamnitzer}
J.~Kamnitzer.
\newblock Mirkovi\'c-vilonen cycles and polytopes.
\newblock {\em Annals of Mathematics}, 171(1), 2010.

\bibitem{Kamnitzer-Sadanand}
J.~Kamnitzer and C.~Sadanand.
\newblock Modules with 1-dimensional socle and components of lusztig quiver
  varieties in type a.

\bibitem{Kashiwara}
M.~Kashiwara.
\newblock On crystal bases.
\newblock {\em Canadian Mathematical Society Conference Proceedings}, 1995.

\bibitem{Kashiwara-Saito}
M.~Kashiwara and Y.~Saito.
\newblock Geometric construction of crystal bases.
\newblock {\em Duke Math. Journal}, 89(9-36), 1997.

\bibitem{Mirkovic-Vilonen}
I.~Mirkovic and K.~Vilonen.
\newblock Perverse sheaves on affine grassmannians and langlands duality.
\newblock {\em Math. Research Lett.}, 7:13--24, 2000.

\bibitem{Notes}
D.~Nadler.
\newblock Lecture notes from a course on geometric representation theory, uc
  berkeley spring '17.

\bibitem{Cesnavicius}
K.~\v{C}esnavi\v{c}ius.
\newblock $\ell$-adic perverse sheaves.

\bibitem{Zhu}
X.~Zhu.
\newblock An introduction to affine grassmannians and the geometric satake
  equivalence.

\end{thebibliography}

\end{document}